\newtheorem{teo}{Theorem}[section]
\newtheorem{lema}{Lemma}[section]
\newtheorem{pro}{Proposition}[section]
\newtheorem{rmk}{Remark}[section]\newtheorem*{rmk*}{Remark}
\theoremstyle{remark}     %roman body text
\newtheorem{ex}[rmk]{Example}\newtheorem*{ex*}{Example}
\newtheorem*{exs*}{Examples}
\def\sideremark#1{\ifvmode\leavevmode\fi\vadjust{\vbox to0pt{\vss
\hbox to 0pt{\hskip\hsize\hskip1em%                          
\vbox{\hsize2cm\tiny\raggedright\pretolerance10000%        
\noindent #1\hfill}\hss}\vbox to8pt{\vfil}\vss}}}%
\theoremstyle{plain}      %default one (italics)
\newtheorem{lemma}[rmk]{Lemma}
\newtheorem{theorem}[rmk]{Theorem}
\theoremstyle{definition} %roman body
\newcommand{\bt}{\begin{theorem}}\newcommand{\et}{\end{theorem}}
\newcommand{\bl}{\begin{lemma}}\newcommand{\el}{\end{lemma}}
\newcommand{\bp}{\begin{proof}}\newcommand{\ep}{\end{proof}}
\newcommand{\be}{\begin{equation}}\newcommand{\ee}{\end{equation}}
\newcommand{\bdm}{\begin{displaymath}}
\newcommand{\edm}{\end{displaymath}}
\numberwithin{equation}{section}
\def \bnabla{\overline{\nabla}}
\def \z{\zeta}
\newcommand{\lto}{\longrightarrow}
\newcommand{\D}{\curly{D}}
\renewcommand{\O}{\Omega}
\renewcommand{\o}{\omega}
\renewcommand{\geq}{\geqslant}
\renewcommand{\leq}{\leqslant}
\newcommand{\w}{\wedge}
\renewcommand{\ln}{\textsl{ln}\,}
\DeclareMathOperator{\Ker}{Ker}
\renewcommand{\Re}{\textsl{Re\,}}
\renewcommand{\Im}{\textsl{Im\,}}
\renewcommand{\span}{\textsl{span}}
\newcommand{\curly}{\mathscr}
\newcommand{\q}{\quad}
\newcommand{\bb}{\mathbb}
\newcommand{\ba}{\begin{array}}\newcommand{\ea}{\end{array}}
\renewcommand{\&}{{\footnotesize \&}}
\begin{document}
%\larger[2]
%%%%%%%%%%%%%%%%%%%%%%%%%%%%%
%%%%%%%%%%%%%%%%%%%%%%%%%%%%%%%%
% submitted to LMS on 16.09.2010; uploadaded .pdf file is chiossinag.pdf
% revised on 03.06.2011; uploaded .pdf file is chinag-rev.pdf
%%%%%%%%%%%%%%%%%%%%%%%%
%%%%%%%%%%%%%%%%%%%%%%%% 
\title[]{Complex homothetic foliations on K\"ahler manifolds}
\subjclass[2000]{53C12, 53C25, 53C55}
\keywords{K\"ahler manifold, Einstein almost K\"ahler metric, complex homothetic foliation, K\"ahler manifold of Calabi type}
\thanks{Partially supported by the {\sc Prin} \oldstylenums{2007} ``Geometria differenziale ed analisi globale'' of {\sc Miur}, the Royal Society of New Zealand, Marsden grant no. 06-UOA-029, and {\sc Gnsaga} of INdAM.}
%\date{\today}
\author[S. G. Chiossi]{Simon G. Chiossi} 
\address[SGC]{Dipartimento di matematica, Politecnico di Torino, c.so Duca degli Abruzzi 24, 10129 Torino, Italy}
\email{simon.chiossi@polito.it}

\author[P.- A. Nagy]{Paul-Andi Nagy} 
\address[PAN]{Institut f\"ur Mathematik und Informatik, Ernst-Moritz-Arndt Universit\"at Greifs\-wald, Walther-Rathenau str.47, 
D-17487 Greifswald}
\email{nagyp@uni-greifswald.de}

\frenchspacing

\begin{abstract}
We obtain a local classification of complex homothetic foliations  on K\"ahler manifolds by complex curves. This is used to construct almost Kaehler, %non-K\"ahler 
Ricci-flat metrics subject to additional curvature properties.
\end{abstract}

%\date{\today}
\maketitle

\tableofcontents
\section{Introduction}
Consider a foliation $\mathcal{F}$ on a Riemannian manifold $(M,g)$. It is called \emph{conformal} if 
$$ 
L_Vg=\theta(V)g
$$
holds on $T\mathcal{F}^{\perp}$ for any vector $V$ tangent to the leaves where $\theta$ is a $1$-form on $M$ that vanishes on $T\mathcal{F}^{\perp}$. The foliation $\mathcal{F}$ is called {\it{homothetic}} if $\theta$ is closed, that is $d\theta=0$. 
These natural classes of foliations, as introduced in \cite{V2}, provide a geometric set-up for integrating the Einstein \cite{wood} or Einstein-Weyl \cite{Pedersen-Swann} equations. Moreover, homothetic foliations 
are important sources of harmonic morphisms, see \cite{Baird-Wood-book, pant}. Examples include Riemannian metrics of 
warped-product type \cite{sven}, and the case when the leaves are of dimension, or co-dimension, one is fairly well understood \cite{pant,Bryant}. 

When $(M^{2m},g,J)$ is Kaehler and $\mathcal{F}$ is a complex Riemannian foliation, or equivalently $\theta$ vanishes, the situation has been studied in \cite{nagy1,nagy2}. However, homothetic 
foliations with complex leaves on K\"ahler manifolds are still mysterious and their classification an open problem. If the manifold is foliated by complex curves we prove:
\begin{teo} \label{hmt} 
A connected K\"ahler manifold equipped with a complex homothetic foliation by curves is either 
\begin{itemize}
\item[(i)] locally obtained from a K\"ahler manifold of Calabi type via a transversely holomorphic twist;
\item[or]
\item[(ii)] the local Riemannian product of a Riemann surface and a K\"ahler manifold;
\item[or]
\item[(iii)] the local Riemannian product of a K\"ahler manifold and a twistor space over a quaternion K\"ahler manifold with positive scalar curvature.
\end{itemize}
\end{teo}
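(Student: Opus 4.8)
The plan is to exploit the structure imposed by the homothetic condition on the unit tangent field to the curves, reducing everything to the geometry of a single Killing-type vector field. Let $\z$ be a local unit section of $T\mathcal F$, so that $\{\z,J\z\}$ spans the leaf distribution. Since $\mathcal F$ is complex and homothetic, the $1$-form $\theta$ is closed, hence locally exact: $\theta=df$ for a function $f$. Replacing $\z$ by $e^{f/2}\z$ (a transverse conformal change that does not affect the foliation) one arranges that $\z$ becomes a Killing vector field, or at worst that $L_{\z}g$ and $L_{J\z}g$ are controlled by a closed $1$-form that can be absorbed by a twist. First I would write down the covariant derivative $\nabla\z$ in terms of its symmetric and skew parts, decompose it according to the splitting $TM=T\mathcal F\oplus T\mathcal F^{\perp}$, and translate the homothetic hypothesis together with the integrability of $J$ into algebraic identities among these components. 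The key object is the skew part, which defines a transverse $2$-form; the closedness of $\theta$ forces this form to satisfy a first-order equation.

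Next I would compute the curvature obstructions. Feeding the structure equations for $\z$ into the second Bianchi identity and the Kähler curvature symmetries, one obtains restrictions on how the length of $\z$ varies along the leaves and on the eigenvalue structure of the operator $R(\cdot,\z)J\z$ acting on $T\mathcal F^{\perp}$. The expectation, guided by the analogous one-dimensional-leaf results in \cite{pant,Bryant} and the complex Riemannian case of \cite{nagy1,nagy2}, is that $T\mathcal F^{\perp}$ splits into at most two parallel-up-to-twist subbundles: one on which the transverse metric is genuinely homothetic in the Calabi sense — giving case (i), a Kähler manifold of Calabi type modified by a transversely holomorphic twist — and a flat factor along which nothing happens, contributing the Riemann-surface product factor of case (ii). The appearance of the twistor space in case (iii) is the subtlest point: it arises when the transverse holonomy of the twisted connection is quaternionic, i.e. when $R(\cdot,\z)J\z$ together with two further operators generate an $\su(2)$; one then recognizes the base as a quaternion Kähler manifold and the distribution orthogonal to the fibres as its twistor distribution, positivity of the scalar curvature being forced by the completeness of the $\su(2)$-action on each leaf.

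The organizational backbone would be: (a) normalize $\z$ using $d\theta=0$; (b) derive the torsion/second-fundamental-form identities of the leaf distribution and its orthogonal complement; (c) identify the twist that renders the transverse structure integrable, so that one may pass to a quotient; (d) on the quotient, analyze the induced Kähler data and split off the flat directions; (e) match the three possible holonomy reductions of the remaining transverse connection to the three items of the statement, invoking standard characterizations of Calabi-type Kähler manifolds and of twistor spaces of positive quaternion Kähler manifolds.

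The main obstacle I anticipate is step (c)–(e): showing that the twist is \emph{transversely holomorphic} and that the quotient really is smooth Kähler data of the claimed type, rather than merely an infinitesimal/holonomy statement. Concretely, one must rule out mixed cases where the transverse holonomy is a non-trivial product of a Calabi-type factor with a quaternionic factor in a way not accounted for by a global Riemannian splitting — this requires a careful de Rham–type decomposition argument adapted to the twisted connection, together with control of the curve-length function $|\z|$ across the factors. I would expect the closedness of $\theta$ to be exactly what forbids such mixing, but pinning that down cleanly is the crux of the argument.
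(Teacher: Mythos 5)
Your plan is organized around a single vector field $\z$ spanning the leaves and a putative holonomy decomposition of a transverse connection, with the three cases of the theorem arising as three holonomy reductions. This does not match how the statement is actually proved, and the mismatch hides a genuine gap: the three cases do not come from one holonomy analysis but from a prior dichotomy (Proposition \ref{gl}) — the foliation is either \emph{holomorphic}, or \emph{Riemannian and totally geodesic}, and these branches are mutually exclusive on a connected manifold. Cases (ii) and (iii) live entirely in the second branch, where $\theta\equiv 0$; there your whole apparatus (normalizing $\z$ via $\theta=df$, studying $R(\cdot,\z)J\z$) is vacuous because $\z$ carries no information. The twistor spaces of case (iii) are not detected by a quaternionic transverse holonomy or an $\su(2)$ generated by curvature operators: they come from observing that the sign-reversed almost complex structure $I$ makes $(g,I)$ nearly K\"ahler with parallel intrinsic torsion equal to the O'Neill tensor $\mathring{\xi}$, and then invoking the classification of such structures in \cite{nagy1}. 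The ``mixing'' you worry about in steps (c)--(e) is excluded not by a de Rham-type decomposition but by the identity \eqref{closed}: closedness of $\theta$ forces $\mathring{\xi}=0$ wherever $\theta\neq 0$, and $D\mathring{\xi}=0$ together with connectedness then forces one branch or the other globally.

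In the holomorphic branch your outline is closer in spirit but has a concrete error at the start: rescaling $\z$ by $e^{f/2}$ does not make it (or anything proportional to it) Killing. One must genuinely deform the metric along $\D^{+}$ by a twist $w$ keeping the symplectic form fixed, chosen so that $L_{J\z}\vert\theta\vert_{g_w}=0$; only then is $\varphi(t)J\z$ a Hamiltonian Killing field for $g_w$ (Proposition \ref{tw1-K}), and the existence of that twist requires solving $\partial_{\D^{-}}\tilde{\chi}=0$, $L_{J\z}\tilde{\chi}=-\tfrac12\chi$ along the Sasakian leaves of $\Ker\theta$ — a step absent from your plan. After that, the identification with a twisted Calabi-type metric is not a holonomy reduction but an explicit integration of the LeBrun/Pedersen--Poon reduction equations \eqref{red-1}--\eqref{red-3}, where $d\o^{-}=\theta\w\o^{-}$ forces $\O=zb$ and hence the Calabi normal form $g=q(z)dz^2+q(z)^{-1}(ds+\alpha)^2+zb(\cdot,I\cdot)$. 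Your step (c) correctly anticipates that transverse holomorphy of the twist is the integrability condition to be verified, but without the structure equations of Lemma \ref{eqn-mnth} (in particular the formula for $de$ and the identification $\gamma=-2\partial_{\D^{-}}\ln\vert\theta\vert$) there is no mechanism in your argument to produce or characterize that twist.
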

\noindent 
The name Calabi type refers essentially to Calabi's construction of Kaehler metrics on 
a holomorphic line bundle over a K\"ahler base. Simple examples thereof include metric cones of Sasakian manifolds, and an abstract characterisation for surfaces was obtained in \cite{WSD}. 
A twist is simply a smooth map from the K\"ahler manifold to the open unit disc in the complex plane. In (i) above locally means around each point of some dense open subset in $M$.

By using results in \cite{nagy1,nagy2} we first show that that either $\mathcal{F}$ is holomorphic or Riemannian and totally geodesic and that the last instance corresponds to cases (ii) and (iii) in theorem \ref{hmt}. 
The remaining case to treat is when $\mathcal{F}$ is holomorphic, however it is the symplectic perspective 
that suits the problem best. Indeed, the aforementioned twist is actually a deformation of the K\"ahler structure along the leaves of the foliation only, that keeps the symplectic form fixed. If one assumes that the leaves 
are of complex dimension one, the well-known parametrisation of twists linearises in a nice way under certain integrability conditions. In fact, the remaining of the proof of theorem \ref{hmt} is based on the inspection 
of the structure equations of holomorphic  homothetic foliations by curves, which can be solved explicitly. This allows to find a twist that renders the Lee form $\theta$ of $\mathcal{F}$ dual to the gradient of the moment map of a 
certain holomorphic Killing vector field. The familiar reduction procedure \cite{LeBrun:nCP2, P-P} for circle-symmetric K\"ahler metrics then completes the argument. 

The twisted Calabi construction's symplectic nature  suggests how one should obtain almost K\"ahler metrics with curvature properties close to K\"ahler manifolds. In the final section of 
the paper we use the recipe of theorem \ref{hmt} to present new instances of almost K\"ahler structures of so-called class $\mathcal{AK}_3$ in the sense of \cite{Gray, Tri2}. This class has been vigourously studied in dimension $4$ by Apostolov 
et.al \cite{AAD, Apostolov-AD:integrability}; in higher dimension locally irreducible examples are supported by twistor spaces over negative quaternion-K\"ahler manifolds \cite{al}. If one looks at 
dimension $4$ these metrics are Einstein (actually Ricci-flat) precisely when they arise, in the form of the Gibbons-Hawking Ansatz, from a translation-invariant harmonic 
function; see \cite{Apostolov-AD:integrability, Nurowski-P:aK-nK} for details. We show that iterating Calabi's construction with a twist yields examples of another kin, %not kind
and in any dimension, of Einstein(Ricci flat) almost-K\"ahler, non-K\"ahler metrics in the class $\mathcal{AK}_3$. Non Ricci-flat, homogeneous, examples of almost-K\"ahler, non-K\"ahler metrics in 
dimension $4m, m\geq 6$ can be found in \cite{Al}(see also \cite{ADM}).
%%%%%%%%%%%%%%%%%%%%
%%%%%%%%%%%%%%%%%%%%
\section{Complex homothetic foliations} %\label{sec7}
\subsection{K\"ahler structures of Calabi type} \label{c-s} 
Following  \cite{WSD, Hwang-S:} we briefly outline the main features of the construction borne in on the seminal article \cite{cal}.

Let $(N,g_{N},I_{N})$ be a K\"ahler manifold with fundamental $2$-form $\o_{N}=g_N(I_N\cdot, \cdot)$. In this setting we will use the convention 
$ I_N\alpha=\alpha(I_N \cdot)$ for the action of $I_N$ on one forms. 
Now take  a Hermitian line bundle $(L,h) \lto N$ and a Hermitian connection $D^L$ on it with 
curvature tensor $-i\o_{N} \otimes 1_L$, and let $M=L^{\times}$ be the total space of $L$ without the zero section. Choosing $D^L$ induces a splitting 
$$TM=\D^{+} \oplus \D^{-}$$ 
where $\D^{+}$ is the distribution tangent to the fibres of $L$ and $\D^{-}$ its orthogonal complement. 
If we let $r:M \to \mathbb{R}$ denote the fibres' norm, and call $2K$ the generator of the natural circle action on $M$, so that $\exp(2\pi i K)=-1$, the fibres' inner product induces a Hermitian metric $h$ on $\D^{+}$ such that $h(K,K)=r^2$. The operator 
$D^L$ also yields a connection form $\Theta$ in $\Lambda^1M$ such that $ \Theta(K)=1, \ d \Theta=\o_{N}$. 
The two form $\Theta \w rdr$ endows $\D^{+}$ with its natural orientation. But $\D^{+}$ comes also equipped with a complex structure $J_{0}$ satisfying $J_0dr=r\Theta$, which can be extended to $M$ by lifting $I_{N}$ horizontally; we shall still denote it by $J_0$. For suitable functions $F:] 0, \infty[ \to \mathbb{R}$ the closed two-form $\o_{N}+(dJ_0d) F(r)$ gives a K\"ahler structure with respect to $J_0$ that makes $K$ a Hamiltonian Killing vector field. Moreover, by changing the sign of $J_0$ along $\D^{+}$ we obtain an 
almost complex structure $I_0$ that is orthogonal for $g_0$. Parametrising the structure by the moment map $G(r)=1+r F^{\prime}(r)$ of $K$ we may write the metric as
\begin{equation*}
g_0=G(r)g_{N}-r^{-1}G^{\prime}(r)h,
\end{equation*}
with $G$ positive and decreasing in order to define a genuine Riemannian metric. 
The fundamental $2$-forms relative to $J_{0}, I_{0}$ consequently read
\begin{equation*}
\o_{J_0}= G(r)\o_{N}-G^{\prime}(r) \Theta \w dr, \q \o_{I_0}=G(r)\o_{N}+G^{\prime}(r) \Theta \w dr,
\end{equation*}
and the volume form becomes
\begin{equation} \label{vol}
\o_{J_0}^m=-mG^{m-1}(r)G^{\prime}(r) \o_N \w \Theta \w dr.
\end{equation}
The distribution $\D^{+}$ is clearly totally geodesic with respect to  $g_0$ (see also \cite{Hwang-S:}); it is also easy to see that $I_0$ is integrable and that $\D^{+}$ induces a homothetic foliation.

The triple $(M,g_0,J_0)$ thus constructed will be referred to as a \emph{K\"ahler manifold of Calabi type}; we will also allow Riemannian products of K\"ahler manifolds with Riemann surfaces be part of this definition. In real dimension 
$4$  any K\"ahler surface $(M^4,g_0,J_0)$ with a Hamiltonian Killing vector field $K$ for which the almost complex structure $I_0$ reversing the sign of $J_0$ along $\span \{K,J_0K\}$ is locally conformally K\"ahler has to be of Calabi type \cite{WSD}.

Note that the Lee form $\theta_0$ of the complex structure $I_0$, its norm and its dual $\z_0$ 
with respect to $g_0$ satisfy the following relationships:
\begin{equation*}% \label{formulas}
\begin{split}
&\theta_0= -\frac{2H^{\prime}(r)}{H(r)}dr, \q  \vert \theta_0 \vert^2_{g_0}=4r H^{\prime}(r), \q \z_0= -2r H(r) \frac{d}{dr}
\end{split}
\end{equation*}
where the map $H(r)$ is the reciprocal of $G(r)$and $\frac{d}{dr}$ is the projection of the radial vector field of $\bb{C}\backslash \{0\}$ to $M$.
%%%%%%%%%%%%%%%%%%
%%%%%%%%%%%%%%%%%%
\subsection{The structure equations} \label{str-eqn}
Let $(M^{2m},g,J), m \geq 2,$ be a K\"ahler manifold endowed with a complex foliation, and denote by $\D^{+}$ the  distribution tangent to its leaves, by $\D^{-}$ the orthogonal complement. We 
assume that $\D^{+}$ induces a homothetic foliation by curves, that is 
\begin{equation} \label{eq-hm}
L_Vg=\theta(V)g
\end{equation}
on $\D^{-}$, whenever $V$ belongs to $\D^{+}$. Moreover $\theta$ in $\Lambda^1 \D^{+}$ satisfies $d\theta=0$ and the real rank of $\D^{+}$ is two. We will denote by $\z$ the vector field dual to $\theta$, that is $\theta=g(\z, \cdot)$.
Decomposing the K\"ahler form $\o_{J}=\o^{+}+\o^{-}$ along $TM=\D^{+} \oplus \D^{-}$ implies 
$$-d\o^{+}=d\o^{-}.$$

The orthogonal almost complex structure 
which reverses the sign of $J$ on $\D^{+}$ will be called $I$; it has K\"ahler form $\o_I=-\o^{+}+\o^{-}$. Recall that a complex distribution $E \subseteq TM$ is holomorphic iff 
$(L_XJ)TM \subseteq E$ for all $X$ in $E$, in which  case the complex Frobenius theorem ensures that $E$ is locally spanned by holomorphic vector fields. 

We begin by showing that the following alternative holds. 
\begin{pro} \label{gl}
A complex homothetic foliation by curves on a connected K\"ahler manifold is either
\begin{itemize}
\item[(i)] holomorphic 
\item[or]
\item[(ii)] totally geodesic and Riemannian.
\end{itemize}
\end{pro}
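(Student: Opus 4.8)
The plan is to extract information from the homothety condition \rf{eq-hm} by differentiating it, and then to combine this with the K\"ahler condition $\na J=0$. Write $V$ for a (local) section of $\D^{+}$. The identity $L_Vg=\theta(V)g$ on $\D^{-}$ encodes the second fundamental form of the leaves together with the $\D^{-}$-component of $\na V$; the fact that $\theta$ vanishes on $\D^{-}$ and that $\D^{+}$ has rank two means the geometry of the foliation is governed by very few functions. The first step is to take the exterior derivative of $\theta$ and use $d\theta=0$ to relate the mean-curvature vector of the leaves to $\z$, and to record how $\na_X\z$ decomposes for $X\in\D^{-}$; this is where the homothetic (as opposed to merely conformal) hypothesis is used. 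I expect a clean dichotomy to emerge according to whether $\z$ lies in $\D^{+}$ or not: since $\theta\in\La^1\D^{+}$ we already have $\z\in\D^{+}$, so the real alternative will be between $\z\equiv 0$ and $\z\not\equiv0$ on an open set.

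\emph{Case $\z\equiv 0$ (i.e.\ the foliation is Riemannian).} Here \rf{eq-hm} reduces to $L_Vg=0$ on $\D^{-}$, so $\D^{-}$ is invariant under the leaf geodesic flow in the metric sense. One then shows the leaves are totally geodesic: differentiate the relation $\o^{+}+\o^{-}=\o_J$ using $d\o_J=0$ and $-d\o^{+}=d\o^{-}$, and feed in the vanishing of $\theta$ to kill the second fundamental form of $\D^{+}$. This puts us in alternative (ii) of the proposition, and one also checks $\D^{-}$ is then auto-parallel so the structure is (locally) as in the Riemannian case studied in \cite{nagy1,nagy2}.

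\emph{Case $\z\not\equiv 0$.} On the open set where $\z\ne0$ I want to prove $\D^{+}$ is holomorphic, i.e.\ $(L_XJ)TM\subseteq\D^{+}$ for all $X\in\D^{+}$. The strategy is to compute $L_XJ$ directly from $\na J=0$: for a Killing-type field one has $L_XJ=\na_{JX}\cdot-J\na_{X}\cdot$ type expressions, but here $X$ is only a foliation field, so instead one uses that $(L_XJ)Y$ is controlled by $\na_YX$ modulo $J$. Decomposing $\na_YX$ along $\D^{+}\oplus\D^{-}$ and inserting the information from Step 1 — that the $\D^{-}\!\to\!\D^{-}$ part of $\na X$ is a multiple of the identity by \rf{eq-hm}, hence commutes with $J$ on $\D^{-}$ since $J$ preserves $\D^{-}$ — shows the obstruction to holomorphicity is a tensor built purely out of $\theta$ and $d\o^{\pm}$. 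The K\"ahler identity $d\o_J=0$ together with $\rank\D^{+}=2$ (so that $J$ acts on the rank-two bundle $\D^{+}$ essentially as a rotation, and $\o^{+}$ is, fibrewise, a volume form) forces this obstruction to vanish wherever $\theta\ne0$. Thus $\D^{+}$ is holomorphic on a dense open set, and by connectedness and the complex Frobenius theorem this is alternative (i).

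The main obstacle is the bookkeeping in the second case: one must carefully separate the four blocks of $\na$ (namely $\D^{\pm}\to\D^{\pm}$ and the two mixed ones), see which are constrained by \rf{eq-hm}, which by $d\theta=0$, and which by $d\o_J=0$, and verify that together they pin down $L_XJ$. In particular one needs the rank-two hypothesis in an essential way — it is what makes the self-adjoint endomorphism coming from $L_X J|_{\D^+}$ either zero or forces $\D^{+}$ to rotate rigidly — so the argument genuinely does not extend to higher-rank leaves without extra input. Once the two cases are in place, patching them over $M$ uses only that $M$ is connected and that "$\z\ne0$" is an open condition whose complement, if it has interior, gives (ii) there; a short real-analyticity or propagation argument (or simply passing to the dense open set as the statement of Theorem \ref{hmt} already does) closes the dichotomy.
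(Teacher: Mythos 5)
There are two genuine gaps. The first, and most serious, is the global dichotomy. You prove holomorphicity of $\D^{+}$ only on the open set where $\z\neq 0$ and then invoke ``connectedness'', ``real-analyticity or propagation'', or density to conclude. But $\theta$ is merely a closed $1$-form on a smooth Kähler manifold: its zero set can have non-empty interior, so $\{\theta\neq 0\}$ need not be dense, there is no real-analyticity to appeal to, and nothing in your argument prevents $M$ from being holomorphic on one open piece and non-holomorphic Riemannian on another. This propagation step is exactly the non-trivial content of the proposition, and the paper supplies it with a specific mechanism you are missing: the obstruction to holomorphicity is identified as the $J$-commuting, skew part $\mathring{\xi}$ of the O'Neill tensor restricted to $\D^{-}$ (via $g((L_VJ)X,Y)=-2g(JV,\mathring{\xi}_XY)$), and one shows $D\mathring{\xi}=0$ for the adapted connection $D$ on a dense set --- trivially where $\theta\neq0$ (since there $d\theta=0$ forces $\mathring{\xi}=0$, because $\mathring{\xi}_XY\in\D^{+}=\span\{\z,J\z\}$ is annihilated by $\theta$ and $\mathring{\xi}$ commutes with $J$), and on the interior of $\{\theta=0\}$ by recognising $(g,I)$ as nearly Kähler with intrinsic torsion $\mathring{\xi}$ and quoting the parallelism results of \cite{nagy1,nagy2}. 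Parallelism then forces $\mathring{\xi}$ to be identically zero or nowhere zero on connected $M$, which is the dichotomy.

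The second gap is your treatment of the case $\z\equiv 0$: you assert that a Riemannian complex foliation by curves is totally geodesic, to be extracted from $d\o_J=0$ and $-d\o^{+}=d\o^{-}$. This is not true in that generality (a holomorphic Riemannian foliation can have non--totally-geodesic leaves; the $2$-forms $\o^{\pm}$ do not see the second fundamental form of $\D^{+}$, which lives in $S^2\D^{+}\otimes\D^{-}$), and it is also not what the proposition claims: alternative (ii) is reached only when the foliation fails to be holomorphic, i.e.\ when $\mathring{\xi}\neq0$. In the paper, total geodesy there comes from the orthogonality relation $\langle\mathring{\xi}_XY,\xi_VZ\rangle=0$ of \cite{nagy2} together with the fact that $\span\{\mathring{\xi}_XY\}=\D^{+}$ wherever $\mathring{\xi}\neq0$ --- so your case split by the vanishing of $\z$ should be replaced by a case split on the vanishing of the holomorphicity obstruction. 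Your analysis in the region $\theta\neq0$ is essentially sound and close to the paper's, but without the two points above the proof does not close.
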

\begin{proof}
Let $D$ be the orthogonal projection of the Levi-Civita connection onto the splitting $TM=\D^{+} \oplus \D^{-}$. This is a metric and also Hermitian connection, since $(g,J)$ is K\"ahler and the tensor 
$\xi=D-\nabla$ in $\Lambda^1M \otimes \Lambda^{1,1}M$ describes the usual O'Neill obstruction tensors.   Therefore the components of $\xi$ in $\Lambda^1 \D^{+} \otimes \Lambda^{1,1}\D^{+}$ and 
$\Lambda^1 \D^{-} \otimes \Lambda^{1,1}\D^{-}$ vanish and the restriction of $\xi$ to $\D^{+}$ is symmetric since the latter is integrable; moreover from \eqref{eq-hm} and $[\xi,J]=0$ it follows that 
\begin{equation} \label{i-t}
\xi_XY=\mathring{\xi}_XY+\frac{1}{2}\langle JX,Y \rangle J\z+\frac{1}{2}\langle X,Y \rangle \z
\end{equation}
where $\mathring{\xi}_XY+\mathring{\xi}_YX=0$ and $\mathring{\xi}_{X}JY=J\mathring{\xi}_XY$ for all $X,Y$ in $\D^{-}$. Then an easy computation gives $g((L_VJ)X,Y)=-2g(JV, \mathring{\xi}_XY)$ for all $V$ in $\D^{+}$ and whenever 
$X,Y$ are in $\D^{-}$; hence $\D^{+}$ if and only if $\mathring{\xi}$ vanishes. From the closure of $\theta$ 
\begin{equation} \label{closed}
0=d\theta(X,Y)=-\theta[X,Y]=2\theta(\mathring{\xi}_XY)
\end{equation}
for all $X,Y$ in $\D^{-}$.

Now on the open set where $\theta \neq 0$ we have $\D^{+}=span \{\z,J\z\}$ hence the above shows that $\mathring{\xi}=0$. Let us now work on some open set where $\theta$ vanishes. Then the foliation is Riemannian and it has been proved in \cite{nagy2} that 
$\langle \mathring{\xi}_XY, \xi_{V}Z \rangle=0$ for all $X,Y,Z$ in $\D^{-}$ and $V$ in $\D^{+}$. Where $\mathring{\xi} \neq 0$ we have by a dimension argument $span \{\mathring{\xi}_XY: X,Y \in \D^{-} \}=\D^{+}$ showing that the 
restriction of $\xi$ to $\D^{+}$ vanishes, thus the latter is totally 
geodesic; but in this case $(g,I)$ is a nearly K\"ahler structure with intrinsic torsion $\mathring{\xi}$ and canonical hermitian connection $D$ hence $D \mathring{\xi}=0$(see \cite{nagy1} for details).   

By a density argument we conclude that $D\mathring{\xi}=0$ over $M$. Therefore 
either $\mathring{\xi}$ vanishes identically when we obtain (i), or it is nowhere vanishing when again by \eqref{closed}, we get $\theta=0$ and further that $\D^{+}$ is totally geodesic by the considerations above. 
\end{proof}

In the situation of (ii) above the structure $(g,I)$ is nearly K\"ahler having a real two dimensional invariant distribution (here $\D^{+}$) invariant under its canonical Hermitian connection $D$. If $(g,I)$ is K\"ahler it is clear that $\D^{+}$ is parallel 
w.r.t. the Levi-Civita connection of $g$, which is therefore locally the Riemannian product of a Riemann surface and a K\"ahler manifold; otherwise,  
by \cite{nagy1} we have that $(g,J)$ is locally the Riemannian product of a K\"ahler manifold and the twistor space of a quaternion K\"ahler manifold with positive scalar curvature.

Therefore, in order to prove theorem \ref{hmt}, there remains to consider only the case when $\D^{+}$ is holomorphic, an assumption which will be made in the remaining of this section.   
For the local classification of holomorphic homothetic foliations by curves we first look at their structure equations. 
\begin{lema} \label{eqn-mnth} The following hold on the open set where $\theta \neq 0$:
\begin{itemize}
\item[(i)] \label{1} we have  
$$L_{\z}J=(\chi_1 \theta+\chi_2 J\theta) \otimes \z+(-\chi_2\theta+\chi_1 J\theta )\otimes J\z$$ 
for some local functions $\chi_1, \chi_2$ on $M$, with 
$ \chi_1=-2 \vert \theta \vert^{-2} L_{J\z} \ln \vert \theta \vert$;
\item[(ii)] the function $\chi=\chi_1+i\chi_2$ satisfies $\partial_{\D^{-}}\chi=0$ where $\partial_{\D^{-}} : \Lambda^0\D^{-} \to \Lambda^{1,0}\D^{-}$ is the transverse Dolbeault operator;
\item[(iii)] we have $d\o^{-}=\theta \w \o^{-}$; 
\item[(iv)] \label{4} the form $e=\theta+iJ\theta$ in $\Lambda^{0,1}\D^{+}$ satisfies 
$$ de=\eta \o^{+}+e \w \gamma-\overline{e} \w \overline{\gamma}-i\vert \theta \vert^2 \o^{-}
$$
where $\eta : M \to i\bb{R}$ is some function and $\gamma=-2 \partial_{\D^{-}} \ln \vert \theta \vert$;
\item[(v)] in case $\chi_1=0$ there exists a non-zero local function $\varphi$ with $d\varphi \w \theta=0$ such that $\varphi J\z$ is a Hamiltonian Killing vector field for $(g,J)$.
\end{itemize}
\end{lema}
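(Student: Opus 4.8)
The plan is to extract everything from the structure equations on the holomorphic locus where $\theta\neq 0$, using the ansatz $\mathring\xi=0$ established in Proposition \ref{gl}. First I would set up adapted coordinates on the distribution $\D^{+}$: since $\D^{+}=\span\{\z,J\z\}$ is holomorphic and of complex rank one, it is locally spanned by a holomorphic vector field, and I would write $\z,J\z$ in terms of it to produce the local functions $\chi_1,\chi_2$ in (i). The identity (i) is really just the definition of $L_{\z}J$ decomposed into its $\D^{+}$-valued pieces, since $L_{\z}J$ maps $TM$ into $\D^{+}$ by holomorphy and annihilates $\D^{-}$ by \eqref{i-t} with $\mathring\xi=0$; the explicit formula $\chi_1=-2|\theta|^{-2}L_{J\z}\ln|\theta|$ should come from contracting $L_{\z}J$ with $\z$ and using $L_V g=\theta(V)g$ together with $L_\z(J\z)=JL_\z\z-(L_\z J)\z$ and the relation $L_\z|\theta|^2$. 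Part (ii) then follows from the holomorphy of $\D^{+}$: the complex Frobenius theorem gives a local holomorphic frame, and $\chi=\chi_1+i\chi_2$ is the coefficient of the connection-type term in that frame; its transverse $\bdel$-closedness, i.e.\ $\partial_{\D^-}\chi=0$, is the integrability/flatness statement for that holomorphic line, obtained by differentiating (i) and using $d^2=0$ (or the closure of $\theta$ via \eqref{closed} with $\mathring\xi=0$).

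Next I would derive (iii) and (iv) by differentiating the splitting $\o_J=\o^{+}+\o^{-}$. From $-d\o^{+}=d\o^{-}$ and the fact that $\D^{+}$ is totally geodesic along its leaves with mean-curvature type data governed by $\theta$ via \eqref{i-t}, the exterior derivative of $\o^{-}$ can only have the components allowed by holomorphy; computing $d\o^{-}$ by the formula $d\o^{-}(X,Y,Z)=\sum \pm(\nabla_X\o^{-})(Y,Z)$ and substituting $\nabla=D-\xi$ with $\xi$ given by \eqref{i-t} (so $\mathring\xi=0$) should collapse everything to $d\o^{-}=\theta\w\o^{-}$. For (iv), the form $e=\theta+iJ\theta$ spans $\Lambda^{0,1}\D^{+}$, and I would compute $de$ again via $\nabla=D-\xi$: the $\o^{+}$ term with coefficient $\eta\in i\bb R$ comes from the $D$-part (the curvature of the transverse holomorphic structure on the line $\D^{+}$), the $e\w\gamma-\bar e\w\bar\gamma$ terms encode the variation of $|\theta|$ transversally with $\gamma=-2\partial_{\D^-}\ln|\theta|$, and the $-i|\theta|^2\o^{-}$ term is forced by $de$ restricted to $\D^{-}\times\D^{-}$, which by \eqref{i-t} equals $-\theta\w J\theta$ evaluated there — and $\theta\w J\theta=|\theta|^2\o^{+}$ on $\D^{+}$, with the cross terms producing the $\o^{-}$ piece. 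Consistency of (iv) under $d^2=0$ should moreover recover (iii) and (ii), which is a useful cross-check.

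Finally, for (v): assume $\chi_1=0$, so $L_{J\z}\ln|\theta|=0$, i.e.\ $|\theta|$ is constant along the flow of $J\z$. The goal is a nonzero function $\varphi$ with $d\varphi\w\theta=0$ — meaning $\varphi$ is (locally) a function of the "radial" parameter transverse to $J\z$ inside $\D^{+}$, constant along $J\z$ and along $\D^{-}$ — such that $\varphi J\z$ is Hamiltonian Killing for $(g,J)$. I would first show $\varphi J\z$ is Killing: using (i) with $\chi_1=0$ one gets $L_{\z}J=\chi_2(J\theta\otimes\z-\theta\otimes J\z)$, so $L_{J\z}J=JL_\z J$ by the Nijenhuis-type identity $L_{JX}J=JL_XJ$ valid for integrable $J$; combined with $L_{J\z}g=0$ on $\D^{-}$ (from \eqref{eq-hm}, since $J\z\in\D^{+}$) and a suitable choice of $\varphi$ to kill the tangential part of $L_{J\z}g$ on $\D^{+}$, Killing follows. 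The Hamiltonian property then requires $d(\varphi J\z\hook\o_J)=d(\varphi\theta)=0$ (using $J\z\hook\o_J=-\theta$ or $+\theta$ up to sign on $\D^{+}$ and that $\o^{-}$ contributes nothing since $J\z\perp\D^{-}$): this is $d\varphi\w\theta+\varphi\,d\theta=d\varphi\w\theta=0$, which is exactly the imposed condition $d\varphi\w\theta=0$, so one just needs to produce such a $\varphi$ solving the single ODE that makes $\varphi J\z$ Killing — an ODE in the radial variable whose solvability is guaranteed by $\chi_1=0$ ensuring its coefficients depend only on that variable.

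I expect the main obstacle to be part (v), specifically pinning down the precise ODE for $\varphi$ and checking that the condition $\chi_1=0$ is exactly what makes its coefficients depend only on the allowed variable so that a nonzero local solution with $d\varphi\w\theta=0$ exists; the bookkeeping of which $\D^{+}$-components of $L_{J\z}g$ survive (and must be absorbed by $\varphi$) versus which vanish automatically is the delicate point. By contrast (i)–(iv) are essentially a systematic expansion of $d\o^{\pm}$ and $L_\z J$ through the formula \eqref{i-t}, cross-checked by $d^2=0$.
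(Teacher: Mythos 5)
Your overall strategy matches the paper's (expand $L_{\z}J$ using holomorphy, compute $d\o^{-}$ and $de$ from \eqref{i-t}, reduce (v) to a first-order ODE along $\theta=dt$), but two steps that carry the real content are not justified. First, in (i) the crux is not that $L_{\z}J$ takes values in $\D^{+}$ (that is holomorphy) but that the one-form $\alpha$ in $L_{\z}J=\alpha\otimes\z+J\alpha\otimes J\z$ vanishes on $\D^{-}$, i.e.\ $(L_{\z}J)|_{\D^{-}}=0$. You derive this from ``\eqref{i-t} with $\mathring{\xi}=0$'', but that identity, $g((L_VJ)X,Y)=-2g(JV,\mathring{\xi}_XY)$ for $X,Y\in\D^{-}$, only controls the $\D^{-}$-component of $(L_VJ)X$, which is zero automatically by holomorphy; it says nothing about the $\D^{+}$-component $JD_XV-D_{JX}V$, which is exactly what must be killed. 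The argument is therefore circular. The paper obtains $\alpha\in\Lambda^1\D^{+}$ from the closedness of $\theta$ together with the K\"ahler condition: since $L_{\z}\o_J=d(\z\lrcorner\o_J)$ is of type $(1,1)$, anti-symmetrising the expression of $L_{\z}\o_J$ in terms of $\alpha$ yields $-\alpha\w J\theta+J\alpha\w\theta=0$, forcing $\alpha$ to lie in $\Lambda^1\D^{+}$. Without this step the parametrisation by $\chi_1,\chi_2$, and with it parts (ii), (iv) (which uses $L_{\z}J=0$ on $\D^{-}$ to identify $\gamma$) and (v), does not get off the ground.

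Second, in (v) you correctly reduce the problem to the ODE $\varphi^{\prime}=\varphi\chi_2$, but the solvability hinges on $\chi_2$ being a function of $t$ alone, i.e.\ $d\chi_2\w\theta=0$, and this is precisely the point you leave unproved (``solvability is guaranteed by $\chi_1=0$ ensuring its coefficients depend only on that variable''). This is not automatic: $\chi_1=0$ together with (ii) gives only $d_{\D^{-}}\chi_2=0$, so $d\chi_2\in\Lambda^1\D^{+}$; one must still rule out a $J\theta$-component. The paper does this by differentiating once more to get $d\chi_2([X,Y])=0$ for $X,Y\in\D^{-}$ and then using the bracket identities from \eqref{i-t}, namely $\langle\z,[X,Y]\rangle=0$ and $\langle J\z,[X,Y]\rangle=\vert\theta\vert^2\langle X,JY\rangle$, to conclude $(d\chi_2)(J\z)=0$ and hence $d\chi_2\w\theta=0$. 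You flag this as the delicate point but supply no argument, so as written the proof of (v) is incomplete. (Parts (iii) and (iv) are essentially the paper's computations and are fine modulo the dependence of (iv) on the unproved vanishing of $L_{\z}J$ on $\D^{-}$.)
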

\begin{proof}
(i) Because the tangent distribution $\D^{+}$ is holomorphic we can write $L_{\z}J=\alpha \otimes \z+J\alpha \otimes J\z$, where $\alpha$ is a one-form on $M$. Since $d\theta=0$ and $(g,J)$ is K\"ahler 
we get after anti-symmetrisation  that $-\alpha \w J\theta+J \alpha \w \theta=0$, hence $\alpha$ belongs to $\Lambda^1\D^{+}$ and the desired parametrisation follows. Therefore 
$[\z,J\z]=\vert \theta \vert^2 \chi_1 \z-\vert \theta \vert^2 \chi_2 J\z$, and in particular 
$\vert \theta \vert^4 \chi_1=\theta[\z,J\z]$: the claim follows by taking into account that $d\theta=0$.\\
(ii) Since $J$ is integrable we have $\partial(L_{\z}J)=0$, which gives $d \alpha$ in $\Lambda^{1,1}M$. The claim follows then from $d\theta=0$ and the fact that $d(J\theta)$ lives in $\Lambda^{1,1}M$.\\
(iii) Because $d\o^{-}=-d\o^{+}$ and $\o^{+}$ vanishes on $\D^{-}$ it follows that $d\o^{-}$ has vanishing component in $\Lambda^3\D^{-}$.  Now for any $V_1,V_2$ in $\D^{+}$ we have 
\begin{equation*}
\begin{split}
V_1 \lrcorner V_2 \lrcorner d\o^{-}= V_1 \lrcorner L_{V_2}\o^{-}=L_{V_2}(V_1 \lrcorner \o^{-})+[V_1,V_2] \lrcorner \o^{-}=0
\end{split}
\end{equation*}
by sucesively using that $\o^{-}$ vanishes on the integrable distribution $\D^{+}$. There remains to compute, for $X,Y$ in $\D^{-}$ and $V$ in $\D^{+}$ 
\begin{equation*}
\begin{split}
d\o^{-}(V,X,Y)=(\nabla_V \o^{-})(X,Y)-(\nabla_X \o^{-})(V,Y)+(\nabla_Y \o^{-})(V,X).
\end{split}
\end{equation*}
However $(\nabla_V \o^{-})(X,Y)=(\nabla_V \o^{+})(X,Y)=0$ and $(\nabla_X \o^{-})(V,Y)=-\o^{-}(\nabla_XV,Y)$ hence finally $d\o^{-}(V,X,Y)=-\langle JV, [X,Y] \rangle=\theta(V) \o^{-}(X,Y)$ by using 
\eqref{i-t} and the holomorphy of $\D^{+}$. \\
(iv) The closure of $\theta$ forces $d(J\theta)$ to have type $(1,1)$, hence we can write 
$$d(J\theta)=f\o^{+}+\theta \w \psi+J \theta \w J \psi+g\o^{-}$$ where $\psi$ belongs to $\Lambda^1\D^{-}$ and $f,g$ are functions. Over $\D^{-}$ we have 
$\vert \theta \vert^2\psi=L_{\z}(J\theta)=J L_{\z}\theta=J d_{\D^{-}} \vert \theta \vert^2$ by using that $L_{\z}J$ vanishes on $\D^{-}$. To conclude we note that 
$$d(J\theta) \w \o^{+}=J\theta \w d\o^{+}=\theta \w J \theta \w \o^{-}=
-\vert \theta \vert^2 \o^{+}\w \o^{-}$$
by using (iii).\\
(v) If $\chi_1$ vanishes, part (ii) implies $d\chi_2$ is in $\Lambda^1\D^{+}$; thus after differentiation $d\chi_2([X,Y])=0$ for all $X,Y$ in $\D^{-}$, and so
$$(d \chi_2)\z \langle \z, [X,Y] \rangle+
(d \chi_2)J\z \langle J\z, [X,Y] \rangle=0.$$ 
Since by \eqref{i-t} and $d\theta=0$ we have $\langle \z, [X,Y] \rangle=0$ and $ \langle J\z, [X,Y] \rangle=\vert \theta \vert^2 \langle X,JY \rangle$, it follows that $d\chi_2 \w \theta=0$. Writing locally $\theta=dt$ and 
 $\chi_2=V(t)$, the desired function solves  
$\varphi^{\prime}=\varphi V$, since $L_{\z}J=\chi_2(J\theta \otimes \z-\theta \otimes J\z)$.
\end{proof} 
The case when $\D^{+}$ is totally geodesic falls directly under (v) above; for $\theta$ closed implies $d_{\D^{-}} \vert \theta \vert=0$ hence also $L_{J\z} \vert \theta \vert=0$ as in the proof of previously cited.
%%%
\subsection{Integrability and twists}
We now consider deformations of the complex structure $J$ along $\D^{+}$ in view of next section's results. 

Denote the open unit disc in the plane $\{z \in \bb{C} : \vert z \vert<1\}$ with $\bb{D}$, and let 
$$w=w_1+iw_2:M \to \bb{D}$$ 
be a smooth map, which we will call a {\it{twist}}. 
Then define almost complex structures $J_w, I_w$ on $M$ by 
$$J_w=(1-S)J(1-S)^{-1},\q I_w=(1-S)I(1-S)^{-1}$$
where $S$ equals $\biggl (\begin{array}{cc} w_1 & w_2\\
w_2 & -w_1 \end{array} \biggr )$ on $\D^{+}$, in the basis of one-forms $\{\theta,J\theta\}$, and vanishes on $\D^{-}$. Then $I_w,J_w$ are orthogonal for the metric 
$$g_w=g((1+S)(1-S)^{-1} \cdot, \cdot),$$ 
with unchanged K\"ahler forms
 $$g_w(J_w \cdot, \cdot)=\o_J\q \mbox{and}\q g_w(I_w \cdot, \cdot)=\o_I.$$
It is well known that any pair of commuting almost complex structures that preserves $\D^{\pm}$, equals $(I_0,J_0)$ on $\D^{-}$ and is compatible with $\o^{+}$ on $\D^{+}$ is built in such a way.

We shall discuss the conditions under which the structure is preserved by twisting, meaning that $(g_w,J_w)$ is supposed to remain K\"ahler; if this is the case $\D^{+}$ is automatically the leaf-tangent distribution of a 
complex homothetic foliation, as one can check using basic vector fields. 

We will frequently use the M\"obius transformation $w=-\frac{\tilde{w}}{1+\tilde{w}}$ in order to linearise subsequent quadratic equations; under this transformation  
$\bb{D}$ is mapped onto the half-plane $\{z \in \bb{C} : \Re z > -\frac{1}{2}\}$. We denote by $\z_w$ the vector field dual to $\theta$ with respect to  the metric $g_w$ and note that 
$J_w\z_w=J\z$, since the K\"ahler form of $(g_w,J_w)$ does not change. Furthermore, 
\begin{equation} \label{norm-w}
\vert \theta\vert^2_{g_w}=-\frac{\vert 1+w \vert^2}{1-\vert w \vert^2} \vert \theta \vert^2_g=\frac{1}{2\Re \tilde{w}+1} \vert \theta \vert^2_g.
\end{equation}
\begin{lema} \label{def-tw} 
Let $(M,g,J)$ be a K\"ahler manifold equipped with a holomorphic homothetic foliation by curves. 
Given a twist $w$,
\begin{itemize}
\item[(i)] the structure $J_w$ is integrable iff $\partial_{\D^{-}}\tilde{w}+\tilde{w} \gamma=0$;
\item[(ii)] the structure $I_w$ is integrable iff $\overline{\partial}_{\D^{-}}\tilde{w}+(1+\tilde{w}) \overline{\gamma}=0$.
\end{itemize}
\end{lema}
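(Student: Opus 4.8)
\textbf{Proof plan for Lemma \ref{def-tw}.}
The plan is to compute the Nijenhuis tensor of $J_w$ (and then of $I_w$) directly, exploiting the fact that $J_w$ differs from $J$ only along the rank-two integrable holomorphic distribution $\D^+$. Since $J$ is integrable, the obstruction to integrability of $J_w$ is concentrated in the terms involving the twist function $w$ and its derivatives, and the structure equations of Lemma \ref{eqn-mnth} (particularly parts (iii) and (iv)) control exactly the relevant derivatives of $\theta$ and $J\theta$. The key computational simplification comes from passing to the M\"obius-transformed variable $\tilde w = -w/(1+w)$, as announced before the statement, because the conjugation formula $J_w = (1-S)J(1-S)^{-1}$ becomes affine rather than quadratic in $\tilde w$; concretely, working in the co-frame $\{\theta, J\theta\}$ on $\D^+$ the form $e = \theta + iJ\theta$ of Lemma \ref{eqn-mnth}(iv) gets rescaled by a factor proportional to $(1+\tilde w)$ or $\tilde w$, and demanding that the transformed form again satisfy a structure equation of type $(1,1)$ with respect to the \emph{unchanged} K\"ahler form is what produces the stated first-order PDE.

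In more detail, I would first record that $(g_w, J_w)$ K\"ahler is equivalent to $J_w$ integrable, since the fundamental form $\o_J$ is unchanged and closed by hypothesis; so the whole problem reduces to a Newlander--Nirenberg computation. Next I would set up the $(1,0)$-coframe for $J_w$: along $\D^-$ it agrees with that of $J$, while along $\D^+$ the single $(0,1)$-form $e$ for $J$ gets replaced by a combination of $e$ and $\bar e$ determined by $S$, which after the M\"obius substitution collapses to something like $e_w = (1+\tilde w)\, e$ for $J_w$ (respectively $e_w = \tilde w\, e$, up to an innocuous nonvanishing factor, for $I_w$ — this is where the asymmetry between (i) and (ii) enters, and it matches the asymmetric roles of $\gamma$ versus $(1+\tilde w)\overline\gamma$). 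Integrability of $J_w$ then amounts to $de_w$ having no $(0,2)$-component in the $J_w$-bigrading; I would expand $de_w = d\tilde w \w e + (1+\tilde w)\, de$ and substitute the expression for $de$ from Lemma \ref{eqn-mnth}(iv), namely $de = \eta\,\o^+ + e\w\gamma - \bar e\w\bar\gamma - i|\theta|^2\o^-$. The terms $\eta\,\o^+$ and $-i|\theta|^2\o^-$ are of type $(1,1)$ for $J$ and remain so for $J_w$ (they are built from $\o^\pm$, which are $J_w$-invariant), and the $e\w\gamma$ piece is $(1,0)\w(1,0)$ hence harmless; the only potentially bad term is $-\bar e \w\bar\gamma$ together with the $d\tilde w\w e$ term. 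Isolating the coefficient of the offending $(0,2)$-type wedge and setting it to zero yields precisely $\partial_{\D^-}\tilde w + \tilde w\,\gamma = 0$ for $J_w$, and the analogous computation with $e_w = \tilde w\, e$ (so that $de_w = d\tilde w\w e + \tilde w\, de$, and now the roles of $e$ and $\bar e$ are interchanged because $I_w$ reverses $J$ on $\D^+$) gives $\overline\partial_{\D^-}\tilde w + (1+\tilde w)\,\overline\gamma = 0$.

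The main obstacle I anticipate is bookkeeping the bigrading correctly: the decomposition of forms into types depends on \emph{which} almost complex structure one uses, and $J_w$, $I_w$ differ from $J$, $I$ exactly on the two-dimensional piece where the twist acts, so one must be careful that the structure equations of Lemma \ref{eqn-mnth}, written in the $J$-bigrading, are re-sorted into the $J_w$-bigrading. A clean way to sidestep most of this is to avoid Nijenhuis brackets altogether and instead verify directly that $de_w \in \Lambda^{1,1}_{J_w}M \oplus \Lambda^{2,0}_{J_w}M$, i.e. that $e_w$ generates a differential ideal modulo the $(1,0)$-forms; since $d\o_J = 0$ already guarantees the $\D^-$-transverse part of the integrability is unobstructed, only the single equation coming from the $\bar e$-coefficient survives, which is the announced PDE. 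I would also use the norm identity \eqref{norm-w} as a consistency check on the factor $(1+\tilde w)$, and observe that the two equations in (i) and (ii) are compatible in the obvious way (their difference being an exact-type condition), which is what will later let one impose both simultaneously to keep the bi-Hermitian pair $(I_w, J_w)$ integrable.
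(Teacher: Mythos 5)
Your overall strategy coincides with the paper's: reduce integrability of $J_w$ to the vanishing of the $(2,0)_{J_w}$-part of $d\alpha$ for the twisted $(0,1)$-form $\alpha$ on $\D^{+}$, feed in the structure equation of Lemma \ref{eqn-mnth}\,(iv), and linearise via the M\"obius substitution. But the computation as you set it up would not produce the stated equations, for a concrete reason: the generator of $\Lambda^{0,1}_{J_w}\D^{+}$ is $\alpha=e-w\overline{e}$, which after the substitution $w=-\tilde{w}/(1+\tilde{w})$ is proportional to $(1+\tilde{w})e+\tilde{w}\overline{e}$ --- a genuine mixture of $e$ and $\overline{e}$ --- and it does \emph{not} collapse to $(1+\tilde{w})e$. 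If it did, $J_w$ would agree with $J$ on $\D^{+}$ and there would be no condition on $w$ at all; indeed, running your computation with $e_w=(1+\tilde{w})e$ and testing against $\Lambda^{0,m-1}\D^{-}$ makes every term vanish identically, so one recovers the (automatic) integrability of $J$ rather than the PDE $\partial_{\D^{-}}\tilde{w}+\tilde{w}\gamma=0$. Correspondingly your formula $de_w=d\tilde{w}\w e+(1+\tilde{w})\,de$ is missing the $\overline{e}$-terms; the correct differential is $d\alpha=-dw\w\overline{e}+(1+w)\,de$, using $d\overline{e}=-de$ (which follows from $e+\overline{e}=2\theta$ and $d\theta=0$).

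Your sorting of the terms of $de$ is also off. First, $e$ is a $(0,1)$-form for $J$ and $\gamma\in\Lambda^{1,0}\D^{-}$, so $e\w\gamma$ has type $(1,1)$ for $J$, not $(1,0)\w(1,0)$. More importantly, the clean test --- which the paper uses and which disposes of your worry about re-sorting bigradings --- is $d\alpha\w\alpha\w\beta=0$ for all $\beta$ in $\Lambda^{0,m-1}\D^{-}$, since $\Lambda^{0,m}_{J_w}M$ is spanned by $\alpha\w\beta$. Against such $\beta$ the term $\overline{e}\w\overline{\gamma}$ of $de$ dies (because $\overline{\gamma}\w\beta=0$ for degree reasons), as do the $\o^{\pm}$ terms; what survives is $\partial_{\D^{-}}w\w\overline{e}\w e\w\beta$ together with the term $e\w\gamma$, picked up with coefficient $-w(1+w)$ from $de\w\overline{e}$. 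This is exactly opposite to your identification of $\overline{e}\w\overline{\gamma}$ as the sole offender, and it yields the quadratic equation $\partial_{\D^{-}}w+(w+w^2)\gamma=0$, which the M\"obius map then linearises to (i). For $I_w$ the roles of $\gamma$ and $\overline{\gamma}$ swap because $\Lambda^{0,1}_{I_w}\D^{+}$ is spanned by $\overline{\alpha}$. So: keep the strategy, but redo the computation with the correct twisted coframe.
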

\begin{proof}
(i) As $J$ is integrable and $\D^{+}$ holomorphic, $J_w$ is integrable iff, for any $\alpha$ in $\Lambda^{0,1}_{J_w}\D^{+}$, the form $d\alpha$ has no component in $\Lambda^{2,0}_{J_w}M$;
equivalently $d \alpha \wedge \Lambda^{0,m}_{J_w}M=0$. 
Since the latter space is generated by $\alpha \w \beta$ 
with $\beta$ in $\Lambda^{0,m-1}\D^{-}$ 
we must have $d\alpha \w \alpha \w \beta=0$. Taking now $\alpha=e-w\overline{e}$ yields $\partial_{\D^{-}}w+(w+w^2)\gamma=0$, after a short computation mainly based on (iv) in 
lemma \ref{eqn-mnth}. The claim follows from the explicit 
expression of the transformation $w \mapsto \tilde{w}$ .

Part (ii) is proved similarly, one only has to notice that $\Lambda^{0,1}_{I_w}\D^{+}$ is spanned by $\overline{\alpha}$ with $\alpha$ as above. The integrability of $I_w$ is equivalent to 
$\overline{\partial}_{\D^{-}}w-(1+w) \overline{\gamma}=0$, which yields the claim by M\"obius transformation.
\end{proof}
The above integrability conditions will be used to twist the structure so that $J\z$ is proportional to a Killing vector field first, and then to prove the classification.
\begin{pro} \label{tw1-K}
Let $(M,g,J)$ be a K\"ahler manifold with a holomorphic homothetic foliation. Around points where $\theta \neq 0$ 
there exists a structure-preserving local twist $w$ such that $\varphi(t)J\z$ is a Killing vector field for $g_w$, where $\varphi(t)$ is a one-variable map and $\theta=dt$.
\end{pro}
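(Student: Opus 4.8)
The goal is to find a structure-preserving twist $w$ that makes $J\z$ proportional to a Killing field. The plan is to use the integrability conditions of Lemma \ref{def-tw} as a system of first-order PDE for $\tilde w$, and then read off what the Killing condition demands. First I would observe that $J_w$ integrable (i.e.\ $(g_w,J_w)$ K\"ahler, which is what ``structure-preserving'' means here) forces $\partial_{\D^{-}}\tilde w+\tilde w\gamma=0$ with $\gamma=-2\partial_{\D^{-}}\ln|\theta|$ from Lemma \ref{eqn-mnth}(iv); this singles out $\tilde w$ up to a factor that is constant along $\D^{-}$, namely $\tilde w=\psi\,|\theta|^{2}$ for some $\psi$ with $\partial_{\D^{-}}\psi=0$. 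So the twist we are looking for is essentially parametrised by a ``holomorphic along the leaves'' function, and the whole problem reduces to choosing $\psi$ as a function of $\z,J\z$ only.

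Next I would compute when $\varphi(t)J\z$ is Killing for $g_w$. Since $J_w\z_w=J\z$ and $|\theta|^{2}_{g_w}=(2\Re\tilde w+1)^{-1}|\theta|^{2}_g$ by \eqref{norm-w}, the vector $J\z=J_w\z_w$ in the twisted structure plays exactly the role that $J\z$ played for the original metric, but now with respect to $g_w$. The idea is that after twisting we should be in the situation of Lemma \ref{eqn-mnth}(v), i.e.\ the twisted analogue of $\chi_1$ should vanish: recall $\chi_1=-2|\theta|^{-2}L_{J\z}\ln|\theta|$, and by part (i) of that lemma the real part of $\chi$ governs exactly the failure of $J\z$ to be (proportional to) Killing. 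So the computation to carry out is: express $\chi_1^{(w)}$, the function $\chi_1$ of the twisted K\"ahler structure, in terms of $\chi_1$, $\chi_2$ and the twist, using \eqref{norm-w} and the formula $\chi_1=-2|\theta|^{-2}L_{J\z}\ln|\theta|$. Since $\ln|\theta|^{2}_{g_w}=\ln|\theta|^{2}_g-\ln(2\Re\tilde w+1)$ and $J\z$ is fixed, one gets $\chi_1^{(w)}$ as $\chi_1$ plus a term involving $L_{J\z}\ln(2\Re\tilde w+1)$, i.e.\ involving $L_{J\z}\Re\tilde w = \Re L_{J\z}\tilde w$. Using $\tilde w=\psi|\theta|^{2}$ with $\partial_{\D^{-}}\psi=0$ — so $J\z$ acts on $\psi$ essentially through $\chi$ via the transverse Dolbeault structure — this becomes a first-order ODE for $\psi$ along the single real parameter $t$ (with $\theta=dt$), because everything can be pushed to depend only on $t$.

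Then I would solve that ODE. The condition $\chi_1^{(w)}=0$ is linear in $\Re\tilde w$ after the M\"obius substitution (this is exactly why the lemma emphasises that the M\"obius transform linearises the quadratic equations), so it is solvable locally by an ordinary integration in the variable $t$; the solution is nonzero and can be normalised, and $\partial_{\D^{-}}\tilde w+\tilde w\gamma=0$ is automatically satisfied by the $\tilde w=\psi|\theta|^{2}$ ansatz once $\psi$ depends on $t$ alone and $\gamma=-2\partial_{\D^{-}}\ln|\theta|$. Having arranged $\chi_1^{(w)}=0$, Lemma \ref{eqn-mnth}(v) applied to $(g_w,J_w)$ produces the nonzero function $\varphi=\varphi(t)$, solving $\varphi'=\varphi\chi_2^{(w)}$, for which $\varphi(t)J\z$ is a Hamiltonian Killing field for $g_w$; this is precisely the assertion. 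One should also check $w$ lands in $\bb{D}$, i.e.\ $2\Re\tilde w+1>0$ on a neighbourhood, which holds near a point since we may choose the integration constant freely.

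\textbf{Main obstacle.} The delicate point is the bookkeeping in step two: correctly tracking how $\chi_1,\chi_2$ transform under the twist and verifying that the resulting equation for $\psi$ genuinely decouples to an ODE in $t$ — this relies on combining parts (i), (ii) and (iv) of Lemma \ref{eqn-mnth} (the transverse holomorphy $\partial_{\D^{-}}\chi=0$ and $d\chi_2\wedge\theta=0$-type identities) with the integrability constraint, and it is here that the ``linearisation by M\"obius transformation'' is essential to avoid being stuck with a Riccati-type equation. Once that reduction is in hand, the rest is a local existence statement for a linear ODE plus an appeal to Lemma \ref{eqn-mnth}(v).
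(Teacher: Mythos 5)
Your overall strategy is the right one and coincides with the paper's up to a point: ``structure-preserving'' does mean $J_w$ integrable, the ansatz $\tilde w=\psi\,\vert\theta\vert^2$ with $\partial_{\D^{-}}\psi=0$ does solve $\partial_{\D^{-}}\tilde w+\tilde w\gamma=0$ automatically, and the Killing condition is correctly identified as $L_{J\z}\vert\theta\vert_{g_w}=0$, which via \eqref{norm-w} reads $\vert\theta\vert^2\chi_1(\Re\tilde w+\tfrac12)+L_{J\z}\Re\tilde w=0$. The gap is in your claim that this ``decouples to an ODE in $t$''. It does not: since $\theta(J\z)=g(\z,J\z)=0$, the coordinate $t$ is constant along $J\z$, so if $\psi=\psi(t)$ then $L_{J\z}\psi=0$; substituting $\tilde w=\psi(t)\vert\theta\vert^2$ and using $L_{J\z}\vert\theta\vert^2=-\vert\theta\vert^4\chi_1$, the Killing condition collapses to $\tfrac12\vert\theta\vert^2\chi_1=0$, i.e.\ it is solvable by your ansatz only when $\chi_1$ already vanishes --- which is exactly the situation one is trying to reach, not the hypothesis. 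Note also that $\chi$ is merely \emph{transversely} holomorphic ($\partial_{\D^{-}}\chi=0$); it is not a function of $t$ in general (that only follows in the special case $\chi_1=0$, via lemma \ref{eqn-mnth}(v)), so ``everything can be pushed to depend only on $t$'' is unjustified.

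The correct equation for the factor $\psi=\tilde\chi$ is the transport equation $L_{J\z}\tilde\chi=-\tfrac12\chi$ \emph{along the flow of $J\z$} inside each level set of $t$, together with the constraint $\partial_{\D^{-}}\tilde\chi=0$. The genuine content of the proof, which your proposal skips, is the existence of such a $\tilde\chi$: one integrates $\chi$ along $J\z$ on the integral submanifolds of $\Ker\theta$ (which are Sasakian with Reeb field $J\z$ and contact form $\vert\theta\vert^{-2}J\theta$, by lemma \ref{eqn-mnth}(iv)), and then one must check that the primitive can be chosen with $\partial_{\D^{-}}\tilde\chi=0$; this uses that $L_{J\z}$ commutes with $d_{\D^{-}}$ and hence with $\partial_{\D^{-}}$, together with $\partial_{\D^{-}}\chi=0$ from lemma \ref{eqn-mnth}(ii). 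No M\"obius linearisation difficulty arises at this stage --- the equation is already linear in $\tilde w$ --- so the ``main obstacle'' you identify is not where the real work lies. As written, your argument only re-proves the proposition in the case $\chi_1=0$, where no twist is needed at all.
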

\begin{proof}
First of all we need to exhibit a twist $w$ rendering $J_w$ integrable, 
for which the previous lemma %\ref{def-tw} 
is needed. By (v) in lemma \ref{eqn-mnth} furthermore, in order to have $J\z$ proportional to a Killing field, $L_{J\z} \vert \theta \vert_{g_w}=0$ must hold, that is $\vert \theta \vert^2 \chi_1 (\Re \tilde{w}+\frac{1}{2})+L_{J\z} \Re \tilde{w}=0$ according to \eqref{norm-w}. It is straightforward to verify that 
if $\tilde{\chi}$ satisfies $L_{J\z}\tilde{\chi}=-\frac{1}{2}
\chi$ and $\partial_{\D^{-}}\tilde{\chi}=0$ then $\tilde{w}=\vert \theta \vert^2 \tilde{\chi}$ is a solution to both equations. To finish the proof there remains to prove the existence of such a $\tilde{\chi}$. This comes essentially from the observation that 
$L_{J\z}$ commutes with $d_{\D^{-}}$, hence with its complex counterparts. In fact, it is enough to work along integral sub-manifolds of the integrable distribution $\Ker \theta$, which are Sasakian manifolds with contact form 
$\vert \theta \vert^{-2}J\theta$ and Reeb vector field $J\z$ by (iv) in lemma \ref{eqn-mnth}; in this situation the assertion follows by writing $d_{\D^{-}}$ in local co-ordinates for the Sasakian structure,  and then 
integrating $\chi$ along $J\z$. That we also have $\partial_{\D^{-}} \tilde{\chi}=0$ is due to (ii) in lemma \ref{eqn-mnth}, which asserts that $\partial_{\D^{-}} \chi=0$.
\end{proof}
We may always locally assume that the moment map $z$ of the Killing vector field $X=\varphi(t)J\z$, given by $X \lrcorner\, \o_J=dz$, satisfies $\theta=d\ln z$. That is because for positive functions $a,b$ of $t$ constrained by 
$b^{\prime}+b=-a$, the metric 
\begin{equation} \label{cal-tw}
\hat{g}=ag_{\vert \D^{+}}+bg_{\vert \D^{-}}
\end{equation}
is K\"ahler for $J$, has Lee form $\hat{\theta}=\frac{a}{b}\theta$ and converts $\D^{+}$ in a complex homothetic foliation. The desired relationship between $z$ and the dilation of $\hat{\theta}$ is obtained for 
$b^{\prime}+b(\frac{\varphi^{\prime}+\varphi}{2\varphi})=0$.\\

We are ready now for the main result of this section.
\begin{teo} %\label{main-hmt}
Any K\"ahler manifold with a holomorphic homothetic foliation by curves is locally obtained by twisting a K\"ahler structure of Calabi type. 
\end{teo}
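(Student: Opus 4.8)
The plan is to reduce the classification to a circle-symmetric K\"ahler metric and then apply the familiar Hamiltonian reduction procedure referred to in the introduction. By Proposition \ref{gl} we may assume $\D^{+}$ is holomorphic, and by density it suffices to work on the open set where $\theta\neq 0$. Starting from Proposition \ref{tw1-K}, I would first twist the K\"ahler structure so that $X=\varphi(t)J\z$ becomes a Killing vector field for $g_w$ with moment map $z$; after the further conformal-type adjustment \eqref{cal-tw} we may also assume $\theta=d\ln z$, i.e.\ $t=\ln z$ up to a constant. Thus we have arrived at a K\"ahler manifold $(M,\hat g,J)$ with a Hamiltonian Killing field $X$, whose orbits generate $J\z$, and with the leaf-tangent distribution $\D^{+}=\span\{\z,J\z\}$ still inducing a homothetic foliation by curves with Lee form a multiple of $dz$.

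The key step is to identify this data with the Calabi Ansatz of Section \ref{c-s}. I would set $r$ to be (a function of) the moment map $z$, take $N$ to be the local K\"ahler quotient of $M$ by the $X$-action — a K\"ahler manifold $(N,g_N,I_N)$ of dimension $2m-2$ — and let $L\to N$ be the associated line bundle with the connection $\Theta$ obtained from the horizontal distribution $\D^{-}\cap\Ker\theta$ (equivalently, $\vert\theta\vert^{-2}J\theta$ is, up to normalisation, the contact form on the integral submanifolds of $\Ker\theta$, which are Sasakian by Lemma \ref{eqn-mnth}(iv), exactly as in the proof of Proposition \ref{tw1-K}). The relations among $\theta_0$, its norm, and $\z_0$ recorded at the end of Section \ref{c-s} give the precise dictionary: one must check that the function $H=1/G$ determined by $\vert\theta\vert^2_{\hat g}$ via $\vert\theta_0\vert^2_{g_0}=4rH'(r)$ is consistent with $\z_0=-2rH(r)\tfrac{d}{dr}$ and with the homothety equation \eqref{eq-hm}. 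Concretely, the structure equations of Lemma \ref{eqn-mnth} — in particular $d\o^{-}=\theta\w\o^{-}$ (part (iii)) and the formula for $de$ (part (iv)) — should be solved explicitly: (iii) says $e^{-t}\o^{-}$ is the pullback of the closed form $\o_N$ from the quotient, and (iv) pins down the connection curvature $d\Theta=\o_N$. Matching the metric $\hat g = a\,g_{|\D^{+}}+b\,g_{|\D^{-}}$ against $g_0=G(r)g_N-r^{-1}G'(r)h$ then fixes $G$ as a function of $r$, and positivity/monotonicity of $G$ follows from $\hat g$ being a genuine Riemannian metric together with $G$ decreasing being the content of the homothety condition.

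I expect the main obstacle to be the local integration that produces the quotient base $N$ and the line bundle $L$ with the correct connection — that is, verifying that all the pieces of the Calabi data are globally consistent on a neighbourhood, rather than merely formally matching pointwise. This is where Lemma \ref{eqn-mnth}(iii)–(iv) does the real work: $d\o^{-}=\theta\w\o^{-}$ guarantees that $\o_N$ descends and is closed, $d(J\theta)$ of type $(1,1)$ with the stated form guarantees the Sasakian/contact structure transverse to which $N$ sits, and $\partial_{\D^{-}}\gamma$-type consistency (from $d\theta=0$ and Lemma \ref{eqn-mnth}(ii)) ensures $N$ is K\"ahler. Once $N$, $L$, $\Theta$ are in hand, the remaining verification that $(M,\hat g,J)$ is the Calabi manifold built from them — including that the orthogonal structure $I_0$ reversing $J$ along $\D^{+}$ coincides with $I$ — is a direct comparison of fundamental forms using $\o_{J_0}=G\o_N-G'\Theta\w dr$ and the expressions for $\o^{\pm}$, together with the normalisation $\theta=d\ln z$ fixed above. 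Undoing the two deformations (the twist $w$ and the adjustment \eqref{cal-tw}) then exhibits the original $(M,g,J)$ as a transversely holomorphic twist of a K\"ahler manifold of Calabi type, which is conclusion (i) of Theorem \ref{hmt}.
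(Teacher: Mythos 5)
Your outline follows the paper's strategy up to a point --- twist via Proposition \ref{tw1-K} to obtain a Hamiltonian Killing field $X$ with $\theta=d\ln z$, then pass to the LeBrun--Pedersen--Poon reduction --- but it stops one twist short, and that missing twist is the heart of the argument. After Proposition \ref{tw1-K} and the adjustment \eqref{cal-tw} the metric is \emph{not} yet of Calabi type: in the reduction $\o_J=(ds+\alpha)\w dz+\O$ one does get $\O=zb$ from $d\o^{-}=\theta\w\o^{-}$, but the function $W=g(X,X)^{-1}$ may still depend on the point of $N$ and not only on $z$. Equivalently, $\vert\theta\vert$ need not be constant along the level sets of $z$, i.e.\ $\gamma=-2\partial_{\D^{-}}\ln\vert\theta\vert$ need not vanish; and $\gamma=0$ is precisely the integrability of the sign-reversed structure $I$ (lemma \ref{def-tw}, (ii)), which every Calabi-type metric possesses. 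So your ``direct comparison of fundamental forms'' against $g_0=G(r)g_N-r^{-1}G'(r)h$ cannot go through at this stage: there is no single function $G$ of the moment map doing the job, because the transverse metric is not a fixed $g_N$ rescaled by a function of $z$.

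The paper closes this gap by producing a second twist making $I_w$ integrable as well, i.e.\ solving \eqref{final-t}; the solvability rests on the nontrivial fact that $(d_NJd_N)W=0$, obtained by combining $\O=zb$ with the second-order reduction identity \eqref{red-2}, so that $Jd_NW=d_N\tilde W$ is exact and $\Im\tilde w=-\tilde W/(2W)$ works. Only once both $I$ and $J$ are integrable does one get $W=q(z)$, $\alpha_z=0$, $-d\alpha=b$, and hence the Calabi normal form $g=q(z)dz^2+q(z)^{-1}(ds+\alpha)^2+zb(\cdot,I\cdot)$; undoing the composite deformation (which is again a single twist) then gives the statement. A secondary issue: the set $\{\theta\neq 0\}$ need not be dense, so you must also treat the interior of $\{\theta=0\}$ separately, where the foliation is Riemannian with totally geodesic orthogonal complement and the structure is a twisted local product of a Riemann surface and a K\"ahler manifold (included in the paper's definition of Calabi type).
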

\begin{proof}
We first work on the open set where $\theta \neq 0$ and search for a twist $w$ such that $I_w$ and $J_w$ are both integrable. By lemma \ref{def-tw} this 
amounts to solving $-d_{\D^{-}}\tilde{w}=(1+\tilde{w})\overline{\gamma}+\tilde{w}\gamma$. Then 
$$d_{\D^{-}}(\Re \tilde{w} \vert \theta \vert^{-2})=\vert \theta \vert^{-2} d_{\D^{-}} \ln \vert \theta \vert \q \mbox{and} \q d_{\D^{-}}(\Im \tilde{w} \vert \theta \vert^{-2})=\vert \theta \vert^{-2} Jd_{\D^{-}} \ln \vert \theta \vert,
$$
so that we can look for solutions satisfying $\Re \tilde{w}=\frac{1}{2}(\vert \theta \vert^2-1)$ and 
\begin{equation} \label{final-t}
J d_{\D^{-}} \vert\theta \vert^{-2}=-2d_{\D^{-}}(\Im \ \tilde{w} \vert \theta \vert^{-2}).
\end{equation}
To solve the latter equation, by the argument above and proposition \ref{tw1-K} we are entitled to assume there is a Killing vector field $X$ with $X \lrcorner\, \o_J=dz$ and $\theta=d \ln z$. 

Using the description \cite{LeBrun:nCP2,P-P} of K\"ahler manifolds with a holomorphic Killing vector field $X$,  we write $N$ for the local holomorphic quotient $M \slash \{X,JX\}$, so that to have locally 
$M=\mathbb{R}^2 \times N$ with $X=\frac{d}{ds}, \ X \lrcorner\, \o_J=dz$, where $s,z$ are co-ordinates on $\bb{R}^2$. Moreover,
$$ JX=W^{-1} \frac{d}{dz}, \q Jdz=W^{-1}(ds+\alpha)
$$
where $W^{-1}=g(X,X)$ and $\alpha$, a one-form on $N$ possibly depending on $z$, have zero derivatives $W_s=0$ and $\alpha_s=0$. Then $\o_J=(ds+\alpha) \w dz+\Omega$, with $\Omega$ in $\Lambda^{1,1}N$ such that 
$\Omega(\cdot, J \cdot) >0$. Consequently,
\begin{equation} \label{red-1}
\begin{split}
& d_N\O=0, \q \O_z+d_N \alpha=0,\\
& \alpha_z=-Jd_NW;
\end{split}
\end{equation}
the first line amounts to having $\o_J$ closed, the second describes the integrability of $J$, and $d_N$ is just  the exterior differential on $N$. By differentiating once more we obtain
\begin{equation} \label{red-2}
\O_{zz}-(d_NJd_N)W=0.
\end{equation}
But since $\o^{-}=\O$, imposing $d \o^{-}=d \ln z \w \o^{-}$ yields 
\begin{equation} \label{red-3}
\O=zb
\end{equation}
where $b$ in $\Lambda^{1,1}N$ is closed with $b_s=b_z=0$. In particular $(d_NJd_N)W=0$ by \eqref{red-2}, so $Jd_NW=d_N \tilde{W}$ for some function $\tilde{W}$ on $N$ (possibly depending on $z$). 
To solve \eqref{final-t} we observe that for any function  
$f:M \to \bb{R}$ with $f_s=0$ we have $d_{\D^{-}}f=d_{N}f$; by taking into account that $\vert \theta \vert^{-2}=Wz^{-2}$ we can take $\Im \tilde{w}=-\frac{\tilde{W}}{2W}$.

Now recall that complex homothetic foliations are stable under bi-axial symmetries of type \eqref{cal-tw}, and observe that double twisting is again a twist, because the 
volume form of $\D^{+}$ stays the same; these facts imply that it is always possible to twist the K\"ahler metric so that both $I$ and $J$ become integrable. 

There remains to show that this situation corresponds, locally, to a K\"ahler metric of Calabi type.
By lemma \ref{def-tw} the sign-reversing almost complex structure $I$ is integrable if and only if $\gamma=0$, that is $d_{\D^{-}} \ln \vert \theta \vert=0$ by lemma \ref{eqn-mnth}, (iv). Then $d \ln \vert \theta \vert$ belongs to 
$\Lambda^1 \D^{+}$, and repeating the argument of lemma \ref{eqn-mnth}, (v) yields $d \ln \vert \theta \vert \w \theta=0$. Therefore $\chi_1=0$, and after a transformation of type \eqref{cal-tw} we obtain a Killing vector field such 
that $X \lrcorner\, \omega_J=dz=z \theta$. In the K\"ahler reduction governed by equations \eqref{red-1} this corresponds to $W=q(z)$, so in addition to \eqref{red-3} we also get, from \eqref{red-1}, that $\alpha_z=0$ and 
$-d\alpha=b$. In conclusion $g=q(z)dz^2+\frac{1}{q(z)}(ds+\alpha)^2+zb( \cdot, I \cdot)$ which clearly is the local form of a metric of Calabi type.  

Now on any open set where $\theta=0$, the foliation induced by $\D^{+}$ is Riemannian, and since $(g,J)$ is K\"ahler the considerations at the beginning of section \ref{str-eqn} show that $\D^{-}$ is totally geodesic, in particular integrable.
Writing down the restriction of the K\"ahler structure to $\D^{+}$ using a basis of basic vector fields (with respect to the foliation induced by $\D^{-}$), it is easy to prove 
directly that the structure is locally obtained from the Riemannian product of a Riemann surface and a K\"ahler manifold by means of a twisting endomorphism $S$. Notice 
that when the manifold is complete it is known in this case that both foliations lift to a product foliation on the universal cover \cite{Blum}.
\end{proof}
Conversely, we have showed that K\"ahler manifolds that admit a holomorphic homothetic foliation by curves are obtained locally as follows. 

Let $(M,g_0,J_0)$ be non-product of Calabi type. For any map $w=w_1+iw_2:M \to \bb{D}$ we define the almost complex structures 
$$J_w=(1-S)^{-1}J_0(1-S), I_w=(1-S)^{-1}I_0(1-S),
$$
where $S$ equals $\biggl (\begin{array}{cc} w_1 & w_2\\
w_2 & -w_1 \end{array} \biggr )$ on $\D^{+}$ with basis $\{r^{-1}dr,\Theta \}$, and is the identity on $\D^{-}$, in the notation of section \ref{c-s}. Since $\D^{+}$ is totally geodesic with respect to  $g_0$,  by 
lemma \ref{def-tw} the almost complex structure $J_w$ is integrable if and only if    
\begin{equation} \label{int-w}
d_{\D^{-}}w_2=J_0d_{\D^{-}}w_1,
\end{equation}
which in turn says $w$ is {\it{transversely holomorphic}}. In a basis-free way, this is equivalently re-written as
\begin{equation} \label{in-eq}
(L_{J_0X}S)V=J_0(L_XS)V
\end{equation}
for any $(X,V)$ belonging to $\D^{-} \times \D^{+}$. Similarly, the almost complex structure $I_{w}$ is integrable iff $d_{\D^{-}}w_2=-J_0d_{\D^{-}}w_1$, so in the end $I_w$ and $J_w$ are both complex if and only if $dw \w \theta_0=0$, as expected. 

The Riemannian metric $g_w=g_0((1+S)^{-1}(1-S) \cdot, \cdot)$ is compatible with 
both $J_w$ and $I_w$, and leaves the K\"ahler forms of $(g_0,J_0)$ and $(g_0,I_0)$ unchanged. Therefore, under equation \eqref{int-w} the structure $(g_w,J_w)$ is K\"ahler, and $\D^{+}$ defines a holomorphic homothetic foliation with respect to it.
In the product case, the construction above still applies when the twisting endomorphism $S$, no longer defined in a preferred basis, satisfies \eqref{in-eq}.
 
To sum up, we have actually proved:
\begin{pro} %\label{fin-twist} 
Let $(M,g_0,J_0)$ be K\"ahler of Calabi type, and $w$ a transversely holomorphic twist. Then $(g_w,J_w)$ is a K\"ahler structure for which $\D^{+}$ is a complex homothetic foliation. 
\end{pro}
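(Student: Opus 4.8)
The plan is to assemble the statement directly from the constructions and lemmas already in place, since it is essentially the converse packaging of the preceding theorem. Start with a K\"ahler manifold $(M,g_0,J_0)$ of Calabi type, with the splitting $TM=\D^+\oplus\D^-$, the complex structure $J_0$, the one-form $\theta_0$, and the totally geodesic distribution $\D^+$ as recalled in section \ref{c-s}; the fact that $\D^+$ there induces a homothetic foliation is already noted. Given a transversely holomorphic twist $w:M\to\bb{D}$, form $S$, $J_w$, $I_w$ and $g_w=g_0((1+S)^{-1}(1-S)\cdot,\cdot)$ exactly as above. The transverse holomorphy of $w$ is, by definition, the condition \eqref{int-w}, equivalently \eqref{in-eq} in basis-free form, which by lemma \ref{def-tw}(i) is precisely the integrability of $J_w$ (note $\D^+$ is totally geodesic for $g_0$, so the lemma applies). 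Hence $J_w$ is a complex structure.

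Next I would check that $(g_w,J_w)$ is actually K\"ahler: since twisting leaves the K\"ahler form of $(g_0,J_0)$ unchanged, $\o_{J_w}=g_w(J_w\cdot,\cdot)=\o_{J_0}$ is closed, and a closed fundamental form together with integrability of $J_w$ gives the K\"ahler condition. Then I would verify that $\D^+$ defines a complex homothetic foliation for $(g_w,J_w)$: it is $J_w$-invariant by construction of $S$, of real rank two, and integrable because it agrees with $\D^+$ as a distribution (which was integrable for the Calabi structure, being tangent to the fibres). The homothety property $L_Vg_w=\theta(V)g_w$ on $\D^-$ for $V\in\D^+$, with $d\theta=0$, follows from the remark made earlier in subsection ``Integrability and twists'' that a structure-preserving twist automatically renders $\D^+$ the leaf-tangent distribution of a complex homothetic foliation — the verification uses basic vector fields with respect to the foliation and the fact that $g_w$ coincides with $g_0$ on $\D^-$ up to the conformal factor encoded by $S$. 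In the product case one runs the same argument with $S$ only required to satisfy \eqref{in-eq}.

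The main obstacle, such as it is, is bookkeeping rather than mathematics: one must be careful that the several ``twist'' conventions in the excerpt line up — the endomorphism $S$ is written in the basis $\{r^{-1}dr,\Theta\}$ of $\Lambda^1\D^+$ and equals the identity (not zero) on $\D^-$ in this final normalization, and $J_w=(1-S)^{-1}J_0(1-S)$ here rather than the conjugation by $(1-S)$ on the other side used in lemma \ref{def-tw}; I would check that these sign/ordering choices are consistent so that lemma \ref{def-tw}(i) genuinely yields \eqref{int-w} and so that $g_w$ is positive definite (which needs $w$ valued in $\bb{D}$, i.e.\ $1-\vert w\vert^2>0$). Once that is pinned down, the conclusion is immediate from the cited results, and the statement follows.
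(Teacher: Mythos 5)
Your argument is correct and is essentially the paper's own: the proposition is stated there as a summary of the immediately preceding discussion, which likewise derives integrability of $J_w$ from transverse holomorphy via lemma \ref{def-tw} (using that $\D^+$ is totally geodesic, so $\gamma=0$), observes that the K\"ahler form is unchanged and hence closed, and invokes the earlier remark that a structure-preserving twist automatically makes $\D^+$ a complex homothetic foliation. Your attention to the convention mismatches (the order of conjugation by $1-S$, and $S$ being declared ``the identity'' rather than zero on $\D^-$, which as written would make $1-S$ non-invertible there) is a fair and worthwhile caveat, but it does not change the substance of the proof.
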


Theorem \ref{hmt} in the introduction is now fully proved. We end this section by computing the Ricci form of some special twisted Calabi-type metrics.
\begin{lema} \label{var-ric} 
Let $(M^{2m},g_0,J_0)$ be of Calabi type with moment map $G(r)=A\,\ln r$ for some constant $A<0$. For any transversely holomorphic twist $w$ such that $L_{\frac{d}{dr}}w=L_{K}w=0$, the Ricci form of the K\"ahler structure $(g_w,J_w)$ is 
$$ \rho^{g_w}=\rho^{g_N}-\frac{1}{2}(dJd) \ln  (1-\vert w \vert^2).
$$
\end{lema}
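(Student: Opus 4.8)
The plan is to compute the Ricci form of $(g_w,J_w)$ by exploiting that the K\"ahler form is unchanged, $\omega_{J_w}=\omega_{J_0}=\omega_{J}$, so the volume form $\omega_{J_w}^m$ differs from $\omega_{J_0}^m$ only by the pointwise conformal factor coming from $g_w=g_0((1+S)(1-S)^{-1}\cdot,\cdot)$. Concretely, on $\D^+$ the endomorphism $(1+S)(1-S)^{-1}$ has determinant $(1-|w|^2)^{-1}$ (a direct $2\times2$ computation with $S=\bigl(\begin{smallmatrix}w_1&w_2\\w_2&-w_1\end{smallmatrix}\bigr)$), while on $\D^-$ it is the identity; hence $\omega_{J_w}^m = (1-|w|^2)^{-1}\,\omega_{J_0}^m$ up to a positive constant. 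Since for any K\"ahler metric the Ricci form is $\rho = -\tfrac{1}{2}(dJd)\ln(\text{volume density})$ in a fixed holomorphic trivialisation, the change of volume density gives
\[
\rho^{g_w} = \rho^{g_0} - \tfrac{1}{2}(d\,J_w\,d)\ln(1-|w|^2)^{-1}
           = \rho^{g_0} + \tfrac{1}{2}(d\,J_w\,d)\ln(1-|w|^2).
\]
The hypothesis $L_{d/dr}w = L_K w = 0$ means $w$ is constant along $\D^+$, so $dw$ lies in $\Lambda^1\D^-$; on $\D^-$ we have $J_w = J_0$, and since $I_w,J_w$ being simultaneously integrable here forces $w$ transversely holomorphic, $(d\,J_w\,d)\ln(1-|w|^2) = (dJd)\ln(1-|w|^2)$ with $J$ the original (equivalently $J_0$) structure. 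This already reduces the claim to showing $\rho^{g_0} = \rho^{g_N} - \tfrac{1}{2}(dJd)\ln(1-|w|^2) + \tfrac{1}{2}(dJd)\ln(1-|w|^2)$, i.e. to $\rho^{g_0}=\rho^{g_N}$ for the specific moment map $G(r)=A\ln r$.

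So the core computation is: for a Calabi-type metric with $G(r)=A\ln r$ (equivalently $F'(r) = (A\ln r - 1)/r$), the Ricci form equals the pulled-back Ricci form of the base. I would get this from the volume formula (\ref{vol}), $\omega_{J_0}^m = -m G^{m-1}(r)G'(r)\,\omega_N\wedge\Theta\wedge dr$: with $G=A\ln r$ one has $G'(r)=A/r$, so the density is $-mA^m (\ln r)^{m-1}r^{-1}$ relative to $\omega_N\wedge\Theta\wedge dr$. Then $\rho^{g_0} = \rho^{g_N} - \tfrac12(dJ_0d)\ln\bigl((\ln r)^{m-1}r^{-1}\bigr)$ modulo constants, and one checks that $(dJ_0d)$ annihilates $\ln((\ln r)^{m-1}r^{-1})$ — this is where the special form $G=A\ln r$ is used: in the Calabi coordinates $J_0\,dr = r\Theta$ and $J_0\Theta = -r^{-1}dr$ (so that $dJ_0 d$ applied to a radial function $h(r)$ gives $(rh')'\cdot$ something, up to lower-order terms, plus a multiple of $\omega_N$), and the specific combination $(m-1)\ln\ln r - \ln r$ is engineered so the radial $dJ_0d$ contribution vanishes while the $\omega_N$-part reassembles into $\rho^{g_N}$ minus nothing extra. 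The cleanest route is to recall the general Calabi-type Ricci formula (derivable from (\ref{vol}) and the splitting), specialise it, and observe that $G=A\ln r$ is exactly the case making the fibre curvature term drop out — indeed one expects $\rho^{g_0}$ in general to be $\rho^{g_N}$ plus a term involving $(dJ_0d)$ of a function of $G$, and $G$ logarithmic kills it.

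I expect the main obstacle to be bookkeeping in the Calabi-type Ricci computation: correctly handling the horizontal lift of $I_N$, the terms coming from $d\Theta=\omega_N$, and keeping track of which pieces of $dJ_0d(\text{radial function})$ land in $\Lambda^{1,1}\D^-$ (contributing a multiple of $\omega_N$, which must be absorbed into $\rho^{g_N}$ versus $\rho^{g_0}$) versus $\Theta\wedge dr$. The transverse-holomorphicity step ($J_w d = J_0 d$ on functions pulled up from $\D^-$) and the determinant-of-$S$ step are routine; the genuine content is that $G(r)=A\ln r$ is the distinguished profile for which the Calabi-type total space is "Ricci-pulled-back" from the base, and all I really need is to verify $(d J_0 d)\ln\bigl((\ln r)^{m-1}/r\bigr)=0$ together with the identification of the residual $\omega_N$-term with $\rho^{g_N}$.
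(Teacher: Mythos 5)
Your overall strategy --- reduce the statement to a comparison of ``volume densities'' and then specialise the radial profile --- superficially resembles the paper's, which computes $\rho^{g_w}=-(dJ_wd)\ln|\Psi|_{g_w}$ for a suitable closed form $\Psi$ of type $(0,m)$. But the mechanism you use to produce the term $-\tfrac12(dJd)\ln(1-|w|^2)$ fails. Since $\tr S=0$ and $\det S=-|w|^2$ on $\D^{+}$, one has $\det(1+S)=\det(1-S)=1-|w|^2$, so $\det\bigl[(1+S)(1-S)^{-1}\bigr]=1$, not $(1-|w|^2)^{-1}$; equivalently, as you yourself note, $\o_{J_w}=\o_{J_0}$, hence $\o_{J_w}^m=\o_{J_0}^m$ \emph{exactly}, and the Riemannian volume form carries no information about $w$ at all (your two opening sentences contradict each other). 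The factor $1-|w|^2$ cannot come from the volume density: the formula $\rho=-\tfrac12(dJd)\ln(\mathrm{density})$ is valid only when the density is measured against a frame holomorphic for the complex structure in question, and a $J_0$-holomorphic frame is not $J_w$-holomorphic. The paper extracts the factor by comparing the closed $(0,m)$-forms $\Psi=(e_0-w\overline{e_0})\w\varphi$ (for $J_w$) and $\Psi_0=e_0\w\varphi$ (for $J_0$), which satisfy $\Psi\w\overline{\Psi}=(1-|w|^2)\,\Psi_0\w\overline{\Psi_0}$ and hence, because the two metrics share the same volume form, $|\Psi|^2_{g_w}=(1-|w|^2)|\Psi_0|^2_{g_0}$. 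One must also \emph{prove} $d\Psi=0$ --- this uses the transverse holomorphicity of $w$, the hypothesis $L_{d/dr}w=L_Kw=0$, and the degree argument $\o_N\w\varphi=0$ in $\Lambda^{*}\D^{-}$ --- a step your argument omits entirely, and without which the Ricci formula does not apply. (Even granting your density claim, the sign you obtain, $+\tfrac12(dJ_wd)\ln(1-|w|^2)$, is opposite to the lemma's, so your subsequent cancellation down to $\rho^{g_0}=\rho^{g_N}$ also contains a sign slip.)

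The second half has the same defect in miniature, plus a false identity. For a radial function $u(r)$ one has $J_0du=ru'\,\Theta$ and $d\Theta=\o_N$, so $(dJ_0d)u=(ru')'\,dr\w\Theta+ru'\,\o_N$, which vanishes only when $u$ is constant; in particular $(dJ_0d)\ln\bigl((\ln r)^{m-1}/r\bigr)\neq 0$, so the verification you defer to would not go through. More importantly, the Riemannian volume density is again not the right object: what the paper's argument actually requires is that $|\Psi_0|_{g_0}$ equal $|\varphi|_{g_N}$ up to a constant, which (using $e_0\w\overline{e_0}=-2i\,r^{-1}dr\w\Theta$, $|\varphi|^2_{g_0}=G^{-(m-1)}|\varphi|^2_{g_N}$ and \eqref{vol}) amounts to the constancy of $rG^{m-1}(r)G'(r)$. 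You should carry this computation out explicitly rather than assert it: with $G=A\ln r$ one finds $rG^{m-1}G'=A^m(\ln r)^{m-1}$, which is constant only for $m=1$; the profile making it constant for all $m$ is $G^m$ affine in $\ln r$. This is exactly the crux of the lemma --- the point where the ``distinguished profile'' enters --- and your proposal neither identifies the correct quantity to differentiate nor verifies the identity it relies on.
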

\begin{proof}
Pick $\varphi$ in $\Lambda^{0,m-1}\D^{-}$ such that $d\varphi=0$ and consider 
$$\Psi=(e_0-w\overline{e_0}) \wedge \varphi$$ 
in $\Lambda^{0,m}_{J_w}M$, where $e_0=r^{-1}dr+i\Theta$ is in $\Lambda^{0,1}_{J_0}\D^{+}$.  Holomorphic transversality, together with the fact that $de_0=-d\overline{e}_0$ belongs to $\Lambda^{1,1}\D^{-}$,  forces  
$ de_0\w \varphi=0=d\overline{e}_ 0 \w \varphi$ and $dw \w \varphi=0$, hence $d\Psi=0$. Moreover 
$\Psi \w \overline{\Psi}=(1-\vert w \vert^2) \Psi_0 \w \overline{\Psi}_0$ where $\Psi_0=e_0 \w \varphi$ belongs to $\Lambda^{0,m}_{J_0}M$; it follows that 
$\vert \Psi \vert^2_{g_w}=(1-\vert w \vert^2) \vert \Psi_0 \vert_{g_0}^2$ since the metrics $g_w$ and $g_0$ are compatible with the same symplectic form. Therefore
$$  \rho^{g_w}=-(dJ_wd) \ln \vert \Psi \vert_{g_w}=-\frac{1}{2}(dJd) \ln  (1-\vert w \vert^2)-(dJ_wd) \ln \vert \Psi_0 \vert_{g_0}.
$$
Because of the choice of moment map, the norm $\vert \Psi_0 \vert_{g_0}$ is given by $\vert \varphi \vert_{g_N}$ up to a constant (see also \eqref{vol}), and the claim follows. 
\end{proof}
Note that these metrics are defined for $0 <r <1$.
%%%%%%%%%%%%%%%%%%
%%%%%%%%%%%%%%%%%
\section{Einstein almost K\"ahler metrics} 
\noindent 
Let $\bb{R}^2$ be equipped with co-ordinates $x_1,x_2$,  standard flat metric $g_0$ and standard complex structure $J_0$. Consider $(Z,h,I)$ a K\"ahler manifold and endow $M=\mathbb{R}^2 \times Z$ with the metric and 
orthogonal almost complex structure 
\begin{equation*}
\begin{split} 
g=&(g_0+h)_w, \ J=(J_0+I)_w
\end{split}
\end{equation*}
where $w: Z \to \bb{D}$ is a smooth map. 
Flipping the sign of $J$ along $\span\{\frac{d}{dx_1}, \frac{d}{dx_2}\}$ produces a $g$-orthogonal almost complex structure $\tilde{J}=(-J_0+I)_w$, which is automatically almost K\"ahler, since by construction 
$$g(\tilde{J} \cdot, \cdot)=-dx^1 \w dx^2+\omega^h. $$
By lemma \ref{def-tw} we have that $(g,J)$ is K\"ahler if and only if $w:Z \to \bb{D}$ is holomorphic; in this case the same lemma ensures that $\tilde{J}$ is never integrable for non-constant $w$. These 
assumptions will be valid from now on. 

We begin by showing that the almost K\"ahler structure $(g,\tilde{J})$ belongs to the so-called class $\mathcal{AK}_3$ (see \cite{Gray, Tri2}). This means that the Riemann tensor $R$ preserves the decomposition 
$\Lambda^2M=\lambda^{1,1}_{\tilde{J}}M \oplus \lambda^2_{\tilde{J}}M$, where $\lambda^{1,1}_{\tilde{J}}M$ and $\lambda^{2}_{\tilde{J}}M$ respectively denote the bundles of $\tilde J$-invariant and $\tilde{J}$-anti-invariant two-forms; more explicitly, 
\begin{equation} \label{g3}
R(\tilde{J}X, \tilde{J}Y, \tilde{J}Z, \tilde{J}U)=R(X,Y,Z,U)
\end{equation}
for all $X,Y,Z,U$ in $TM$. 
\begin{teo} \label{ak3} 
The non-K\"ahler, almost K\"ahler manifold $(M,g,\tilde{J})$ belongs to the class $\mathcal{AK}_3$.
\end{teo}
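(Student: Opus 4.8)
The plan is to exploit the concrete shape of $(M,g,\tilde J)$. Since $w$ is a function on $Z$ only, the twisting endomorphism $S$ acts on $\D^{+}=T\mathbb{R}^2$, vanishes on $\D^{-}=TZ$, and has coefficients independent of the $\mathbb{R}^2$-variables $x^1,x^2$; hence in coordinates $(x^1,x^2,y^\alpha)$ on $\mathbb{R}^2\times Z$ the metric is block-diagonal and $x$-independent, $g=f_{ij}(y)\,dx^idx^j+h_{\alpha\beta}(y)\,dy^\alpha dy^\beta$. I will call a coordinate index of \emph{type} $+$ if it is $x^1$ or $x^2$ and of \emph{type} $-$ otherwise, so $\D^{\pm}$ is spanned by the type-$\pm$ coordinate fields, and from the definition of the twist $\tilde J=-J$ on $\D^{+}$ while $\tilde J=J$ on $\D^{-}$. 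Since $w$ is holomorphic, $(g,J)$ is K\"ahler by lemma \ref{def-tw}, so its curvature obeys the standard identity $R(JX,JY,JZ,JU)=R(X,Y,Z,U)$, which I denote $(\ast)$.

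The heart of the argument is the parity statement that $R(X,Y,Z,U)=0$ whenever an odd number of $X,Y,Z,U$ lie in $\D^{+}$. To prove it I would observe that the Christoffel symbols $\Gamma^{\rho}_{\mu\nu}=\tfrac12 g^{\rho\sigma}(\partial_\mu g_{\nu\sigma}+\partial_\nu g_{\mu\sigma}-\partial_\sigma g_{\mu\nu})$ of this $x$-independent, block-diagonal metric are themselves $x$-independent and vanish unless an even number of $\rho,\mu,\nu$ are of type $+$; feeding this into $R^{\rho}{}_{\sigma\mu\nu}=\partial_\mu\Gamma^{\rho}_{\nu\sigma}-\partial_\nu\Gamma^{\rho}_{\mu\sigma}+\Gamma^{\rho}_{\mu\lambda}\Gamma^{\lambda}_{\nu\sigma}-\Gamma^{\rho}_{\nu\lambda}\Gamma^{\lambda}_{\mu\sigma}$ and lowering $\rho$ with the block-diagonal $g$ gives the claim. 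This is the only real content, and the one point needing care: because the Christoffels do not depend on $x$, a derivative along a type-$+$ coordinate annihilates them, which is precisely what prevents the $\partial_\mu,\partial_\nu$ terms from spoiling the parity count. Equivalently one may argue with the Levi-Civita connection on the natural $x$-independent frames, where $\nabla_ab$ shifts $\D^{\pm}$-type by the mod-$2$ degrees of $a$ and $b$, since $\D^{-}$ is totally geodesic and the $\mathbb{R}^2$-orbit directions are Killing with $x$-independent connection coefficients.

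To finish I would introduce the $g$-symmetric involution $P$ that is $+1$ on $\D^{+}$ and $-1$ on $\D^{-}$: the parity statement says exactly that $R$ is $P$-invariant, $R(PX,PY,PZ,PU)=R(X,Y,Z,U)$, while $\tilde J=-PJ$ because $J$ commutes with $P$. Then for all $X,Y,Z,U$,
\[
R(\tilde JX,\tilde JY,\tilde JZ,\tilde JU)=R(PJX,PJY,PJZ,PJU)=R(JX,JY,JZ,JU)=R(X,Y,Z,U),
\]
the four sign changes cancelling, then using $P$-invariance, then $(\ast)$; this is precisely \eqref{g3}, so $(M,g,\tilde J)\in\mathcal{AK}_3$. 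Finally $(g,\tilde J)$ is non-K\"ahler for non-constant $w$, as $\tilde J$ is then non-integrable (as already noted) while an integrable almost K\"ahler structure is K\"ahler. The only genuine obstacle is thus the parity bookkeeping of the second paragraph; everything else is formal.
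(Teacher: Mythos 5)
Your proof is correct, but it takes a genuinely different route from the paper's. The paper reduces \eqref{g3}, via the K\"ahler identity for $J$, to the two vanishing statements $R(V_1,V_2,V_3,X)=0$ and $R(X,Y,Z,V_4)=0$ with $V_i\in\D^{+}$ and $X,Y,Z\in\D^{-}$; it obtains the second from O'Neill's formulas (the foliation by $\mathbb{R}^2$-fibres is Riemannian with totally geodesic orthogonal complement) and the first from a computation with the intrinsic torsion $\eta=\tfrac12(\nabla\tilde J)\tilde J$ and the canonical Hermitian connection $\bnabla=\nabla+\eta$, using that the $K_i=\frac{d}{dx_i}$ are commuting Killing fields to show $(\bnabla_{K_i}\eta)_{K_j}K_k=0$. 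You instead prove the single stronger statement that $R$ kills any quadruple of coordinate directions containing an odd number of $\D^{+}$-directions, by a parity count on the Christoffel symbols of the block-diagonal, $x$-independent metric, and then package this as invariance of $R$ under the involution $P=\pm 1$ on $\D^{\pm}$, so that \eqref{g3} drops out of $\tilde J=-PJ$ together with the K\"ahler identity for $J$. I checked the bookkeeping: $\Gamma^{\rho}_{\mu\nu}$ does vanish unless an even number of its indices are of type $+$ (the $x$-independence kills exactly the derivative terms that would spoil the count), the parity propagates through the curvature formula and the index lowering, and your identities $\tilde J=-PJ$ and $[P,J]=0$ hold because $J_w$ preserves $\D^{\pm}$. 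Your argument is more elementary and self-contained --- no O'Neill formulas, no intrinsic torsion --- and the $P$-invariance of $R$ is conceptually just the statement that the point reflection in the $\mathbb{R}^2$-factor is an isometry fixing the given point. What the paper's less direct route buys is structural information reused immediately afterwards: the $\bnabla$-parallelism of $\D^{\pm}$, the identity $2g(\eta_{K_i}K_j,X)=Xg(K_i,K_j)$ and the consequence $R(V,W)=-[\eta_V,\eta_W]$ all reappear in the proof of proposition \ref{l-i} on local irreducibility, so the two proofs are complementary rather than interchangeable in the context of the paper.
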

\begin{proof}
As $(g,J)$ is K\"ahler, $R(JX,JY,JZ,JU)=R(X,Y,Z,U)$ for all $X,Y,Z,U$ in $TM$, hence \eqref{g3} is easily seen to be equivalent with 
$$ R(V_1,V_2,V_3,X)=0
$$
and 
$$ R(X,Y,Z,V_4)=0
$$
for arbitrary $V_i, 1 \leq i \leq 4$ in $\D^{+}=span\{\frac{d}{dx_1}, \frac{d}{dx_2}\}$ and $X,Y,Z$ in $\D^{-}=({\D^{+}})^{\perp}$. Since $\D^{+}$ induces a Riemannian foliation with totally geodesic 
orthogonal complement, the second claim follows directly from O'Neill's 
formulas \cite{Besse}. There remains to prove the first assertion.

Let $\nabla$ be the Levi-Civita connection of $g$ and let $\eta=\frac{1}{2}(\nabla \tilde{J})\tilde{J}$ be the intrinsic torsion tensor of $(g,\tilde{J})$. The fact that $(g,J)$ is K\"ahler implies $[\eta,J]=0$, hence the canonical Hermitian connection 
$\bnabla=\nabla+\eta$ preserves $\D^{\pm}$. Therefore, using again O'Neill's formulas we get 
$$ -R(V_1,V_2,V_3,X)=\langle (\bnabla_{V_1}\eta)_{V_2}V_3-(\bnabla_{V_2}\eta)_{V_1}V_3,X\rangle 
$$
for all $V_i, 1 \leq i \leq 3$ in $\D^{+}$ and any $X$ in $\D^{-}$.

Now let $K_i=\frac{d}{dx_i}, 1 \leq i \leq 2$. Since $w$ is defined on $Z$, by construction of $g$ we have that $K_1, K_2$ are Killing vector fields for $g$, and holomorphic for $J$ and $\tilde{J}$, that is $L_{K_i}g=0$ and 
$L_{K_i}J=L_{K_i}\tilde{J}=0, i=1,2$. But $K_1,K_2$ commute, so $2g(\nabla_{K_i}K_j,X)=-Xg (K_i,K_j)$ and  hence  
\begin{equation} \label{prel-t}
2g(\eta_{K_i}K_j,X)=Xg(K_i,K_j)
\end{equation} for $1\leq i,j \leq 2$ and any $X$ in $\D^{-}$,
since $\D^{+}$ is parallel with respect to  $\bnabla$. 

Since the metric $g$ only depends on $Z$, the Koszul formula yields $g(\nabla_{K_i}K_j,K_k)=0$ for all $1 \leq i,j,k \leq 2$; moreover $\eta_U \tilde{J}+\tilde{J}\eta_U=0$ for all $U$ in $TM$, and thus rank\,$\D^{+}=2$ implies 
$g(\eta_{K_i}K_j,K_k)=0, 1 \leq i,j,k \leq 2$. Again by the $\bnabla$-parallelism of $\D^{+}$ it follows that $\bnabla_{K_i}K_j=0, 1 \leq i,j \leq 2$.  Differentiating \eqref{prel-t} gives 
$(\bnabla_{K_i}\eta)_{K_j}K_k=0, 1 \leq i,j,k \leq 2$; in particular, $R(K_i,K_j,K_k,X)$ vanishes when $X$ is in $\D^{-}$. 
\end{proof}
We record now some details on the algebraic structure of the intrinsic torsion $\eta$ of $(g,\tilde{J})$. Because $(g,J)$ is K\"ahler, $\D^{-}$ is totally geodesic 
and $\D^{+}$ has real rank two the K\"ahler nullity 
$$ \{U \in TM: \eta_U=0\}=\D^{-}
$$
on the open set where $\eta$(or equivalently $dw$ by \eqref{prel-t}) does not vanish. Easy orthogonality and dimension arguments also show that 
$$ \eta_{\D^{+}}\D^{+} \subseteq \D^{-}, \ \eta_{\D^{+}} \D^{-}=\D^{+}
$$
on the open set where $dw \neq 0$. 

Generically the metrics in theorem \ref{ak3} are locally irreducible as the following shows. 
\begin{pro} \label{l-i}
The Riemannian manifold $(M,g)$ is locally irreducible, unless locally $Z=Z_1 \times Z_2$, a Riemannian product of K\"ahler manifolds such that $dw_{\vert TZ_2}=0$. 
\end{pro}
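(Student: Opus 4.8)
The plan is to confront the de Rham splitting theorem with the structural properties of $\D^{\pm}$ recorded above. If $w$ is locally constant, then $S$ is a parallel endomorphism and $(M,g)$ is the genuine Riemannian product $(\mathbb{R}^2,\gamma)\times(Z,h)$ with $\gamma$ the constant metric $g_0\big((1+S)(1-S)^{-1}\,\cdot\,,\,\cdot\,\big)$, so the statement holds with $Z_1$ a point. Hence assume $w$ non-constant, put $\mathcal{U}=\{dw\neq0\}$ (open and dense), and work near $p\in\mathcal{U}$. Over $\mathcal{U}$ one has: $\D^{-}$ is totally geodesic; since the K\"ahler nullity of $\eta$ is $\D^{-}$, the connection $\bnabla=\nabla+\eta$ agrees with $\nabla$ in $\D^{-}$-directions, so $\nabla_X\Gamma(\D^{+})\subseteq\Gamma(\D^{+})$ for $X\in\D^{-}$; from $\bnabla_{K_i}K_j=0$ one gets $\nabla_VK_a\in\D^{-}$ for $V\in\D^{+}$; and writing $\gamma_{ij}=g(K_i,K_j)$, which are functions pulled back from $Z$, one has $\frac12 X\gamma_{ij}=g(\nabla_XK_i,K_j)$ for $X\in\D^{-}$, while $d\gamma(p)\neq0$ because the assignments $w\mapsto S(w)$ and $S\mapsto\gamma$ are local diffeomorphisms.

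Suppose now $(M,g)$ is reducible, so near $p$ it is a Riemannian product $M_1\times M_2$ of positive-dimensional factors which, since $(g,J)$ is K\"ahler, may be assumed $J$-invariant, hence K\"ahler; in particular $TM_1,TM_2$ are $\nabla$-parallel. As $\D^{+}$ is $J$-invariant of complex rank one, each $\D^{+}\cap TM_i$ is $0$ or all of $\D^{+}$. The core of the argument is to rule out the diagonal position $\D^{+}\cap TM_1=\D^{+}\cap TM_2=0$: I would use that $F_a:=\nabla K_a$ is a skew endomorphism, block-diagonal for $TM_1\oplus TM_2$ (both factors being parallel) and, by the two properties above, interchanging $\D^{+}$ and $\D^{-}$; feeding this into the identity $\nabla^2K_a=-R(\,\cdot\,,\,\cdot\,)K_a$ for Killing fields together with the vanishing $R(TM_1,TM_2,\cdot,\cdot)=0$ of a product forces $F_1(p)=F_2(p)=0$. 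But then $X\gamma_{ij}=2g(F_iX,K_j)=0$ for all $X\in\D^{-}$, i.e. $d\gamma(p)=0$, contradicting $p\in\mathcal{U}$. Hence, after relabelling, $\D^{+}\subseteq TM_1$.

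Then $TM_2=(TM_1)^{\perp}\subseteq\D^{-}$ is $\nabla$-parallel, and since $\D^{-}$ is totally geodesic it restricts to a $\nabla$-parallel distribution on the leaves of $\D^{-}$, each isometric to $(Z,h)$; this gives a K\"ahler splitting $Z=Z_1\times Z_2$ with $TZ_2=TM_2$. Finally $\nabla_{K_i}K_j\in\D^{-}$ also lies in $TM_1$, being the $\nabla$-derivative of a section of the parallel bundle $TM_1$, hence is orthogonal to $TM_2=TZ_2$, so that $X\gamma_{ij}=-2g(\nabla_{K_i}K_j,X)=0$ for every $X\in TZ_2$; thus $w$ is constant along $Z_2$, i.e. $dw|_{TZ_2}=0$. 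The splitting obtained over the dense set $\mathcal{U}$ extends to all of $M$ by continuity, and the converse implication was already established just before the statement.

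The decisive and most delicate point is the exclusion of the diagonal position of $\D^{+}$ inside a would-be product: the commuting Killing fields $K_1,K_2$ need not split into Killing fields of the two factors, so one is forced to reason with the tensor $\nabla K_a$ and its interaction with the product curvature rather than with the vector fields themselves.
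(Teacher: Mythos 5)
Your reduction of the problem to excluding the ``diagonal position'' of $\D^{+}$ inside a putative parallel splitting $TM=TM_1\oplus TM_2$ is the right target, and your endgame --- once $\D^{+}\subseteq TM_1$ is secured --- is essentially correct. But the step you yourself single out as decisive contains a genuine gap. You assert that $F_a=\nabla K_a$ is block-diagonal for $TM_1\oplus TM_2$ on the sole grounds that both factors are parallel. Parallelism only gives $\nabla_X K_a^i\in TM_i$ for the components $K_a=K_a^1+K_a^2$; block-diagonality of $\nabla K_a$ would mean each $K_a^i$ is parallel in the directions of the other factor, i.e.\ precisely that $K_a$ splits into Killing fields of the two factors --- which, as your own closing remark concedes, need not hold (the rotation field on $\R^2=\R\times\R$ already has purely off-diagonal $\nabla K$). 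Moreover, even granting block-diagonality, the conclusion $F_1(p)=F_2(p)=0$ is asserted rather than derived: the Killing identity for $\nabla^2K_a$ combined with $R(TM_1,TM_2,\cdot,\cdot)=0$ only yields the vanishing of the mixed second covariant derivatives of $K_a$, i.e.\ that each diagonal block of $F_a$ is parallel along the other factor; it does not force $F_a$ to vanish at a point. So the diagonal position is not excluded by the argument as written, and the contradiction with $d\gamma(p)\neq 0$ is not reached.

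The paper excludes the diagonal position by a purely algebraic mechanism which you may wish to compare. Let $S$ be the parallel reflection ($+1$ on $TM_1$, $-1$ on $TM_2$), which commutes with $J$ (the flat case being disposed of first via Goldberg's integrability theorem for $\tilde J$ --- a point your proposal also skips), and split $S=S'+S''$ into $\tilde J$-invariant and $\tilde J$-anti-invariant parts. Since $R$ annihilates $S$ and, by O'Neill, $R(V,W)=-[\eta_V,\eta_W]$ for $V,W\in\D^{+}$, the relations $\eta_{\tilde JU}=\eta_U\tilde J$, $S''\D^{\pm}\subseteq\D^{\mp}$ and the fact that $\eta_V^2\leq 0$ preserves $\D^{\pm}$ force $S''=0$ where $\eta\neq0$, hence everywhere by $\bnabla S''=[\eta,S']$. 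Then $S$ commutes with both $J$ and $\tilde J$, hence with $J\tilde J$, which equals $+1$ on $\D^{+}$ and $-1$ on $\D^{-}$; so $S$ preserves $\D^{\pm}$ and, being an involution on the complex line $\D^{+}$, satisfies $S\vert_{\D^{+}}=\pm1$. This is exactly the statement your $F_a$-computation was meant to deliver, obtained without any splitting hypothesis on the Killing fields. To repair your write-up you would either need to adopt this route or give an honest proof that $\nabla K_a$ respects the splitting and that this forces its vanishing.
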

\begin{proof}Assume that $g$ is locally reducible and let $E\subseteq TM$ be parallel w.r.t. the Levi-Civita connection of $g$. Because $(g,J)$ is K\"ahler by a standard holonomy argument we may 
assume that $E$ is stable under $J$, unless $g$ is flat; since it would imply the integrability of $\tilde{J}$(see \cite{Go}) this case can 
be excluded. We obtain a symmetric endomorphism $S$ of $M$ such that $\nabla S=0$ and $[S,J]=0$ given by the identity on $E$ and its opposite on $E^{\perp}$. 
After splitting $S=S^{\prime}+S^{\prime \prime}$ into $\tilde{J}$-invariant resp. $\tilde{J}$-
anti-invariant components it follows that 
\begin{equation} \label{par-S}
\bnabla S^{\prime}=[\eta, S^{\prime \prime}], \ \bnabla S^{\prime \prime}=[\eta, S^{\prime}].
\end{equation}
Since the Riemann curvature acts trivially on $S$ it is straightforward to get from \eqref{g3} that 
$$ [R(U_1,U_2),S^{\prime}]=[R(U_1,U_2),S^{\prime \prime}]=0
$$
for all $U_1,U_2$ in $TM$. Now using the O'Neill's formulas for the remaining 
curvature terms, we obtain, as in the proof of 
theorem \ref{ak3} that 
$R(V,W)=-[\eta_V, \eta_W]$
for all $V,W$ in $\D^{+}$. Because $(g,\tilde{J})$ is almost-K\"ahler we have $\eta_{\tilde{J}U}=\eta_U \tilde{J}$ for all $U$ in $TM$, in particular $\eta_V^2 S^{\prime \prime}+S^{\prime \prime} \eta^2_V=0$ for all $V$ in $\D^{+}$; since 
$S^{\prime \prime} \D^{\pm} \subseteq \D^{\mp}$ and $\eta^2_V \leq 0$ preserves $\D^{\pm}$ it follows that $S^{\prime \prime}$ vanishes on the open set where $\eta \neq 0$ and therefore everywhere by \eqref{par-S}. Therefore 
$S$ is invariant under both $J$ and $\tilde{J}$ and because it is symmetric with $S^2=1_{TM}$ we can assume w.l.o.g. that $S_{\vert \D^{+}}=1_{\D^{+}}$. From $\bnabla S=0$ and $[\eta,S]=0$ we get that the distribution $\ker(S+1) \subseteq 
\D^{-}$ satisfies $\eta(\ker(S+1))=0$ and it is preserved by the connection $\bnabla$. An elementary projection argument and \eqref{prel-t} lead now to a local Riemannian splitting of $Z$ as in the statement.   
\end{proof}
The above $\mathcal{AK}_3$-structures made their first appearance, when $Z$ is a Riemann surface, in \cite{AAD}; the same authors \cite{Apostolov-AD:integrability}  showed that they 
exhaust the class of $4$-dimensional $\mathcal{AK}_3$-manifolds. 
Recall that in dimension $4$ these metrics are Einstein (Ricci-flat) precisely when they arise, in the form of the Gibbons-Hawking Ansatz, from a translation-invariant harmonic 
function; see \cite{Apostolov-AD:integrability, Nurowski-P:aK-nK}.\\

We now construct locally irreducible examples, in arbitrary dimensions, where the almost K\"ahler metrics of theorem \ref{ak3} are Einstein (Ricci-flat). Relying on a similar computation to that of lemma \ref{var-ric}, it is equivalent to 
provide K\"ahler manifolds $(Z,h,I)$ satisfying
\begin{equation} \label{ric}
\begin{split}
\rho^h=& \frac{1}{2}(dId) \ln (1-\vert w \vert^2)=\frac{1}{2}(dId) \ln \Re \hat{w},
\end{split}
\end{equation} 
where $\hat{w}=\frac{1-w}{1+w}$ for some non-constant holomorphic map 
$w :Z \to \bb{D}$. 
\begin{ex} \label{ex0}
The fact that the curvature condition \eqref{ric} is stable under the Calabi Ansatz, in the following sense, is the key.  

In general, if we take a K\"ahler manifold $N$ and use Calabi's recipe to produce $(Z^{2n},h,I)$ with  moment map $G(r)=A\,\ln r$, as explained in section \ref{c-s}, then $ \rho^{g_0}=\rho^{g_N}$ by lemma \ref{var-ric}. Therefore if 
$(g_N,I_N)$ satisfies \eqref{ric} for some holomorphic map $w:N \to \bb{D}$, so does $(h,I)$ after lifting $w$ to $Z$.   

Now suppose the base K\"ahler manifold is a Riemann surface $(\Sigma,g_{\Sigma}, I_{\Sigma})$ equipped with a non-constant holomorphic function $w:\Sigma \to \bb{D}$; equation \eqref{ric} is satisfied 
when $(1-\vert w \vert^2)^{-1}g_{\Sigma}$ is flat. 
Assuming this, we iterate the general construction $n-1$ times while retaining the same type of moment map. The resulting  $2n$-dimensional K\"ahler manifold %$\Sigma_{n-1}, h_n, I_n)$  
 satisfies \eqref{ric}, and possesses a local $T^{n-1}$ action by holomorphic isometries. 
\end{ex} 

Another family of examples arises from observing that \eqref{ric} interacts equally well with the twisted Calabi construction, making again iteration possible. 
\begin{ex} \label{ex1}
Let $(Z_{k},g_k,I_k)$ be obtained by the twisted Calabi construction with base $(Z_{k-1},g_{k-1},I_{k-1})$ as in lemma \ref{var-ric} and holomorphic twist $w:Z_{k-1} \to \bb{D}$. Therefore if $\rho^{g_{k-1}}=\frac{m-k+1}{2}(dI_{k-1}d) \ln (1-\vert w \vert^2) $
we have $\rho^{g_k}=\frac{m-k}{2}(dI_{k}d) \ln (1-\vert w \vert^2).$
The notation is obviously meant to hint at an inductive process. Let the first step ($k=1$) be a Riemann surface $(\Sigma,g_{\Sigma},I_{\Sigma})$ with a non-constant holomorphic function $w:\Sigma \to \bb{D}$  such that 
$\rho^{g_{\Sigma}}=\frac{m}{2}(dI_{\Sigma}d) \ln (1-\vert w \vert^2)$; this amounts to $(1-\vert w \vert^2)^{-m}g_{\Sigma}$ being a flat metric. 
The $(m-1)$-fold iteration of the procedure eventually generates a K\"ahler manifold  
$(Z_{m-1},g_{m-1},I_{m-1})$ of complex dimension $m$ satisfying equation \eqref{ric}. Once again there is a $(m-1)$-torus local action by holomorphic isometries.  Also notice that the next step $(Z_m,g_m,I_m)$ of the iteration is Ricci-flat.   

\end{ex}
The previous two groups of examples are distinguished by the behaviour of the distribution $\Ker dw$. This is of codimension-two, integrable and holomorphic (clearly where $dw \neq 0$). For 
any $(Z^{2m},h,I)$ subject to \eqref{ric}, $\Ker dw$ is tangent to the leaves of a homothetic foliation in both cases \ref{ex0} and \ref{ex1}; but whereas in the former  it is totally geodesic, this not the case in the latter.
\vspace{1cm} 

\frenchspacing

\vspace{1cm}
\end{document}